\definecolor{red}{RGB}{255,0,0}
\definecolor{blue}{RGB}{0,0,255}
\newcommand{\satrfull}{\textsc{Satisfiability Reconfiguration }}
\newcommand{\dsrfullts}{\textsc{Dominating Set Reconfiguration under Token Sliding }}
\newcommand{\satr}{\textsc{SATR }}
\newcommand{\satrnospace}{\textsc{SATR}}
\newcommand{\dsr}{\textsc{DSR$_{\mbox{TS}}$ }}
\newcommand{\dsrnospace}{\textsc{DSR$_{\mbox{TS}}$}}
\newcommand{\psc}{\textsc{PSPACE}-complete }
\newcommand{\pscnospace}{\textsc{PSPACE}-complete}
\newcommand{\npc}{\textsc{NP}-complete }
\definecolor{palette_bleuf}{HTML}{326C85}
\definecolor{palette_bleuc}{HTML}{679FB8}
\definecolor{palette_vert}{HTML}{399C54}
\definecolor{palette_jaune}{HTML}{FCD63E}
\definecolor{palette_beige}{HTML}{FAFEFF}
\definecolor{palette_rouge}{HTML}{9E0A1A}
\definecolor{palette_rose}{HTML}{FF9999}
\newenvironment{proofclaim}[1][]%
    {\noindent \emph{Proof.} {}{#1}{}}{\hfill
    $\Diamond$\vspace{1em}}
\newtheorem{theorem}{Theorem}
\newtheorem{lemma}{Lemma}
\newtheorem{corollary}{Corollary}
\newtheorem{claim}{Claim}
\newtheorem{remark}{Remark}
\providecommand{\keywords}[1]{\textbf{Keywords:} #1}
\title{TS-Reconfiguration of Dominating Sets in circle and circular-arc graphs \thanks{This work was supported by ANR project GrR (ANR-18-CE40-0032).}}
\author[1]{Nicolas Bousquet}
\author[1]{Alice Joffard}
\affil[1]{CNRS, LIRIS, Universit\'e de Lyon, Universit\'e Claude Bernard Lyon 1, Lyon, France
\thanks{firstname.lastname@liris.cnrs.fr}}
\date{}
\begin{document}

\maketitle

\begin{abstract}
We study the dominating set reconfiguration problem with  the token sliding rule. It consists, given a graph $G=(V,E)$ and two dominating sets $D_s$ and $D_t$ of $G$, in determining if there exists a sequence $S=<D_1:=D_s,\ldots,D_{\ell}:=D_t>$ of dominating sets of $G$ such that for any two consecutive dominating sets $D_r$ and $D_{r+1}$ with $r<t$, $D_{r+1}=D_r\setminus u \cup v$, where $uv\in E$. 

In a recent paper, Bonamy et al.~\cite{bonamy2019dominating} studied this problem and raised the following questions: what is the complexity of this problem on circular arc graphs? On circle graphs? In this paper, we answer both questions by proving that the problem is polynomial on circular-arc graphs and PSPACE-complete on circle graphs. \\

\noindent \keywords{reconfiguration, dominating sets, token sliding, circle graphs, circular arc graphs.}

\end{abstract}

\section{Introduction}\label{sec:Intro}

Reconfiguration problems consist, given an instance of a problem, in determining if (and in how many steps) we can transform one of its solutions into another one via a sequence of elementary operations keeping a solution along this sequence. The sequence is called a \emph{reconfiguration sequence}.

Let $\Pi$ be a problem and $\mathcal{I}$ be an instance of $\Pi$. Another way to describe a reconfiguration problem is to define the \emph{reconfiguration graph} $\mathcal{R}_\mathcal{I}$, whose vertices are the solutions of the instance $\mathcal{I}$ of $\Pi$, and in which two solutions are adjacent if and only if we can transform the first solution into the second in one elementary step. In this paper, we focus on the so-called \textsc{Reachability} problem which, given an instance $\mathcal{I}$ of a problem $\Pi$ and two solutions $I,J$ of $\mathcal{I}$, returns true if and only if there exists a reconfiguration sequence from $I$ to $J$ keeping a solution all along. Other works have focused on slightly different problems such as the connectivity of the reconfiguration graph or its diameter, see e.g.~\cite{bousquet2019polynomial,gopalan2009connectivity, haas2014k}.
Reconfiguration problems arise in various fields such as combinatorial games, motion of robots, random sampling, or enumeration.
Reconfiguration has been intensively studied for various rules and problems such as satisfiability constraints~\cite{gopalan2009connectivity}, graph coloring \cite{bonamy2019conjecture,CerecedaHJ11}, vertex covers and independent sets  \cite{HearnD05,ito2011complexity,lokshtanov2018complexity} or matchings~\cite{BonamyBHIKMMW19}. The reader is referred to the surveys~\cite{Nishimura17,van2013complexity} for a more complete overview on reconfiguration problems. In this work, we focus on dominating set reconfiguration. Throughout the paper, all the graphs are finite and simple.

Let $G=(V,E)$ be a graph. A \emph{dominating set} of $G$ is a subset of vertices $X$ such that, for every $v \in V$, either $v \in X$ or $v$ has a neighbor in $X$. A dominating set can be seen as a subset of tokens placed on vertices which dominates the graph. Three types of elementary operations, called \emph{reconfiguration rules}, have been studied for the reconfiguration of dominating sets.

\begin{itemize}
    \item The \emph{token addition-removal} rule (TAR) where each operation consists in either removing a token from a vertex, or adding a token on any vertex (keeping a dominating set).
    \item The \emph{token jumping} rule (TJ) where an operation consists in moving a token from a vertex to any vertex of the graph (keeping a dominating set).
    \item The \emph{token sliding} rule (TS) where an operation consists in sliding a token from a vertex to an adjacent vertex.
\end{itemize}

In this paper, we focus on the reconfiguration of dominating sets with the token sliding rule. Note that we authorize (as well as in the other papers on the topic, see~\cite{bonamy2019dominating}) the dominating sets to be multisets. In other words, several tokens can be put on the same vertex. Bonamy et al. observed in~\cite{bonamy2019dominating} that this choice can modify the reconfiguration graph and the set of dominating sets that can be reached from the initial one. More formally, we consider the following problem:

\smallskip
\noindent \dsrfullts (\dsr)\\
\textbf{Input:} A graph $G$, two dominating sets $D_s$ and $D_t$ of $G$.\\
\textbf{Output:} Does there exist a dominating set reconfiguration sequence from $D_s$ to $D_t$ under the token sliding rule ?

\paragraph{Dominating Set Reconfiguration under Token Sliding.}


The dominating set reconfiguration problem has been widely studied with the token addition-removal rule. Most of the earlier works focused on the conditions that ensure that the reconfiguration graph is connected in function of several graph parameters, see e.g.~\cite{bousquet2020linear,haas2014k,SuzukiMN14}. From a complexity point of view, Haddadan et al.~\cite{haddadan2016complexity}, proved that the reachability problem is \psc under the addition-removal rule, even when restricted to split graphs and bipartite graphs. They also provide linear time algorithms in trees and interval graphs.

More recently, Bonamy et al.~\cite{bonamy2019dominating} studied the dominating set reconfiguration problem under token sliding. They proved that \dsr is \pscnospace, even restricted to split, bipartite or bounded tree-width graphs. On the other hand, they provide polynomial time algorithms for cographs and dually chordal graphs (which contain interval graphs).
In their paper, they raise the following question: is it possible to generalize the polynomial time algorithm for interval graphs to circular arc-graphs ?

They also ask if there exists a class of graphs for which the maximum dominating set problem is NP-complete but its TS-reconfiguration counterpart is polynomial. They propose the class of circle graphs as a candidate. 


\paragraph{Our contribution.}

In this paper, we answer the questions raised in~\cite{bonamy2019dominating}. First,  we prove the following:

\begin{theorem}\label{CIG:main}
$\dsr$ is polynomial in circular arc graphs.
\end{theorem}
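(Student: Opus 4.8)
The plan is to reduce the problem on a circular-arc graph $G$ to the interval-graph case, which is already known to be polynomial: interval graphs are dually chordal (as noted in the excerpt), so the algorithm of Bonamy et al.~\cite{bonamy2019dominating} applies. First I would compute in polynomial time a circular-arc representation of $G$, assuming all arc endpoints are distinct. Call a point of the circle \emph{free} if it lies in no arc. If a free point $p$ exists, then cutting the circle at $p$ exhibits $G$ as an interval graph without breaking any arc or altering any intersection; the \dsr instance on $G$ is then literally an instance on an interval graph, and I would invoke the interval-graph algorithm verbatim. The whole difficulty therefore concentrates in the \emph{covered} case, where every point of the circle belongs to some arc.

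In the covered case the circle cannot be cut without destroying intersections, and reconfiguration genuinely departs from the interval setting. Having the two dominating sets share the same size is necessary but far from sufficient: already on even cycles reachability fails, since in $C_6$ each size-two dominating set $\{i,i+3\}$ is \emph{isolated} (every single slide uncovers a vertex). Any correct algorithm must therefore separate \emph{tight} token configurations, in which no slide preserves domination, from configurations possessing enough \emph{slack} to let tokens circulate around the circle.

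To decide reachability in the covered case I would fix a point $p$; the arcs through $p$ form a clique $C_p$, and cutting at $p$ turns the rest of the structure into an interval graph. A reconfiguration sequence in $G$ then projects onto a sequence on this interval graph together with \emph{boundary data} describing how tokens interact with $C_p$: the tokens sitting on $C_p$ at a chosen moment, and how many tokens cross $p$ in each direction. The strategy is to enumerate the relevant boundary profiles, run the interval-graph subroutine of~\cite{bonamy2019dominating} on each resulting instance subject to the prescribed boundary conditions, and glue the local sequences into a global one. Making this polynomial relies on two structural facts: that $C_p$ is a clique, so its tokens are interchangeable up to sliding and only their \emph{number} crossing $p$ matters; and that in slack regions a canonical ``swept'' configuration covering the circle is reachable from any dominating set, collapsing the apparent degrees of freedom to a bounded count.

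The main obstacle is precisely this covered case, and it carries two coupled difficulties. First, domination must be preserved at every intermediate step: sliding a token across $p$, or through a region dominated by few arcs, can momentarily expose a vertex, so the reduction must certify that each move produced by the interval subroutine lifts to a legal move in $G$. Second, the boundary enumeration must be controlled: a priori $C_p$ may contain linearly many arcs, so the argument that only polynomially many crossing profiles are needed—together with the existence and reachability of the canonical swept configuration in slack regions—is where the genuine work lies, and is exactly what distinguishes the circular case from the interval case.
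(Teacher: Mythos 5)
Your free-point case is fine (if some point of the circle is uncovered, $G$ is an interval graph and the known algorithm applies), but the covered case --- which you correctly identify as the whole difficulty --- is not actually resolved: you propose to cut at an arbitrary point $p$, enumerate ``boundary profiles'' recording which tokens sit on the clique $C_p$ and how many cross $p$ in each direction, and then defer the two essential claims (that polynomially many profiles suffice, and that a canonical ``swept'' configuration is reachable in slack regions) to ``where the genuine work lies.'' Neither claim is justified, and the second is not even clearly true in general; as stated this is a plan, not a proof. The gap is concrete: nothing in the proposal certifies that a sequence on the cut interval graph, constrained by a crossing profile, lifts to a legal sequence in $G$, nor that reachability in $G$ is characterized by the existence of some admissible profile.

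The paper closes exactly this gap by anchoring on a \emph{vertex of the dominating set} rather than a point of the circle. With a token fixed on an inclusion-maximal arc $u$, one deletes $u$ (and the arcs it contains), unfolds the rest into an interval graph $G_u$, and adds two endpoint vertices $u', u''$ with many leaves forcing them into every small dominating set; since the anchor token keeps its region dominated throughout, Theorem~\ref{thm:interval} then shows that \emph{all} dominating sets of the given size containing $u$ lie in one component of the reconfiguration graph (Corollary~\ref{coro:circular_u}). This collapses the problem to tracking a single anchor token: the Right/Left Sliding Algorithms greedily push it clockwise and counterclockwise, at each step re-running the interval subroutine to bring a neighboring token as close as possible, and the correctness of this greedy --- the ``only if'' direction, proved by an exchange argument on the first configuration where the anchor's token passes $u_\ell$ or $u'_{\ell}$ --- is the step your proposal has no analogue of. In short, the anchoring-on-a-dominating-vertex idea replaces your unbounded profile enumeration with a one-dimensional monotone search, and that substitution is precisely what is missing from your argument.
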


The very high level idea of the proof is as follows. If we fix a vertex of the dominating set then we can unfold the rest of the graph to get an interval graph. We can then use as a black-box the algorithm of Bonamy et al. on interval graphs to determine if we can slide the fixed vertex of the dominating set to some more desirable position. 

Our second main result is the following:

\begin{theorem}\label{CG:thm:CG}
\dsr is \psc in circle graphs.
\end{theorem}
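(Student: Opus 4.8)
The plan is to prove Theorem~\ref{CG:thm:CG} in two independent parts: membership in PSPACE, which is routine, and PSPACE-hardness, which is the heart of the argument.

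For membership I would use the standard argument for token-based reconfiguration. A configuration is a multiset of $k := |D_s|$ tokens on $V(G)$, encoded by its vertex-multiplicities in $O(n\log k)$ space, and $k$ is polynomially bounded by the input size. One nondeterministically guesses a reconfiguration sequence one slide at a time, storing only the current configuration and verifying at each step that it is still a dominating set and that consecutive configurations differ by a single slide along an edge. This puts \dsr in NPSPACE, and Savitch's theorem yields \dsrnospace${}\in{}$PSPACE.

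The substance is the hardness reduction. A tempting first move is to reduce directly from \dsr on split, bipartite, or bounded-treewidth graphs, which is already known to be \pscnospace; but none of those classes is contained in the class of circle graphs, so such a reduction cannot be transparent and a fresh, geometry-aware construction is needed. I would therefore reduce from a canonical PSPACE-complete reconfiguration problem carrying a natural combinatorial layout (a reconfiguration variant of satisfiability, or Nondeterministic Constraint Logic) and realize the \emph{entire} construction as a single chord diagram, so that the produced graph is a circle graph \emph{by design}. The template is the usual one: a \emph{variable gadget} per variable, a \emph{constraint gadget} per clause, and \emph{wire gadgets} transmitting a variable's state to the constraints in which it occurs. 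Each gadget carries a fixed token budget whose role is to dominate its own vertices, and its crossing pattern is chosen so that a vertex can be dominated in one of two essentially distinct ways, encoding a Boolean value. Flipping a variable then corresponds to sliding a token from one ``rail'' of the gadget to the other, and the requirement that a dominating set be maintained at \emph{every} intermediate step forces slides in different gadgets to synchronize exactly as the source constraints demand: any globally illegal state would leave some vertex undominated. Here I would carefully exploit, and control, the fact that dominating sets may be multisets, i.e.\ tokens may stack where this is required to pass a token along a wire, while ensuring stacking never creates a shortcut that dominates a constraint gadget ``for free''; the correspondence between source reconfiguration sequences and token-sliding sequences in the constructed graph is then checked in both directions, gadget by gadget.

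The main obstacle is \emph{global circle realizability}. Drawing each gadget individually as a chord diagram is easy; the difficulty is placing all variable, wire, and constraint gadgets as chords on one common circle so that (i) exactly the intended pairs of chords cross, and (ii) no two chords from distinct gadgets cross unintentionally, which would create spurious adjacencies and spurious domination. This is precisely where circle graphs behave differently from arbitrary graphs: one cannot freely prescribe adjacencies, so each gadget's chord endpoints must be laid out in a prescribed circular order with well-defined interfaces, and wires must be routed around the circle to link the (possibly badly interleaved) occurrences of a variable without colliding. I expect this to require a controlled routing mechanism, such as a long ``spine'' chord that every wire crosses in a fixed, parity-consistent way, so that incidences can be reordered along the circle while preserving the intended crossing relation. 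Once a single circular ordering of all endpoints is fixed and shown to reproduce the intended adjacency relation exactly, the gadget-by-gadget simulation argument closes the reduction, which together with membership establishes Theorem~\ref{CG:thm:CG}.
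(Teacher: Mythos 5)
Your overall strategy matches the paper's: membership is routine, and hardness goes by reduction from a satisfiability-reconfiguration problem (the paper uses \satrnospace) realized directly as an interval/chord diagram, adapting Keil's NP-hardness construction for domination in circle graphs; even your ``spine chord that every wire crosses'' is essentially the paper's junction interval $J$. But the proposal stops exactly where the proof begins, and the pieces you leave unspecified are the ones that carry the mathematical weight. Concretely, three things are missing. First, the actual crossing pattern of the variable gadget and the argument that it works: the paper interleaves $m$ disjoint base intervals with $m/2$ ``positive'' and $m/2$ ``negative'' bridge intervals so that a counting-plus-induction argument (Lemma~\ref{CG:lem:bridge}) forces every non-transitioning variable to hold \emph{either} all positive \emph{or} all negative bridge intervals, and a follow-up argument (Lemma~\ref{CG:lem:W}) propagates this choice down the wire to the clause side. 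Saying ``the crossing pattern is chosen so that a vertex can be dominated in one of two essentially distinct ways'' asserts the conclusion of these lemmas without providing the gadget or the proof. Second, the global synchronization you attribute to ``domination at every intermediate step'' is in fact enforced by an exact cardinality computation: the target size $K$ leaves precisely \emph{one} surplus token beyond the per-variable lower bounds of Lemma~\ref{CG:lem:atleast} and the dead-end intervals forced by their $6mn$ pendants, so at most one variable can be mid-flip at any time. Without fixing $K$ and proving the matching lower bounds, nothing prevents two variables from flipping simultaneously through illegal intermediate assignments.

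Third, and most delicate, is the soundness direction. Because intermediate dominating sets can have one variable ``in transit'' (the moving variable), the map from dominating sets back to truth assignments is not well defined pointwise: the paper must first normalize the sequence so that consecutive sets never switch moving variables (Lemma~\ref{CG:lem:S}), and then define the transit variable's value by consulting the \emph{history} of the sequence (the last configuration with no moving variable) in the proof of Claim~\ref{CG:claimr:AsatisfiesF}. Your phrase ``the correspondence is then checked in both directions, gadget by gadget'' glosses over precisely this non-local step, which is the part of the reduction that is genuinely reconfiguration-specific rather than inherited from the NP-hardness proof. Until the explicit interval layout, the tight counting, and the history-dependent back-translation are supplied, the argument is a plausible plan rather than a proof.
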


This is answering a second question of~\cite{bonamy2019dominating}. The proof is inspired from the proof that \textsc{Dominating Set in circle graphs} is NP-complete~\cite{keil1993complexity} but has to be adapted for the reconfiguration framework.

Both our results and the previously known results about the complexity of \dsr in graph classes are summarized in Figure \ref{Intro:fig:classes}.

We left open the following question also raised by Bonamy et al.~\cite{bonamy2019dominating}: does there exist a graph class for which \textsc{Maximum Dominating Set} is NP-complete but \textsc{TS-Reachability} is polynomial? In the reconfiguration world, such results are not frequent but exist. For instance the existence of a reconfiguration sequence between two $3$-colorings can be decided in polynomial time~\cite{CerecedaHJ11} while finding a $3$-coloring is NP-complete. 

We also raise the following question: what is the complexity of the \dsr problem for outerplanar graphs? Outerplanar graphs form a natural subclass of circle graphs, of bounded treewidth graph, and of planar graphs on which the complexity of the problem is PSPACE-complete.

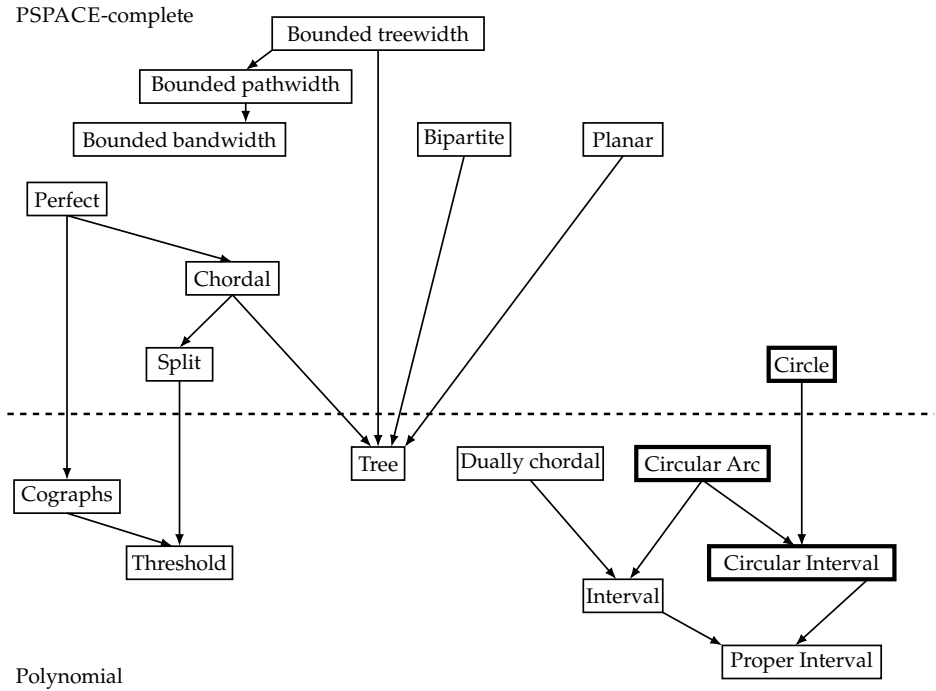
\begin{figure}[ht]
	\centering
	\scalebox{0.8}
	{
	\begin{tikzpicture}[scale=0.11]
    \draw[very thick, dashed] (15,0) -- (155,0);
    \draw[very thick] (15,60) node[right]{\psc};
    \draw[very thick] (15,-40) node[right]{Polynomial};
    
    \draw [line width=0.7mm] (130,5) rectangle (140,10);
    \draw (135,7.5) node{Circle};
    
    \draw [thick] (18,30) rectangle (30,35);
    \draw (24,32.5) node{Perfect};
    
    \draw [thick] (16,-10) rectangle (32,-15);
    \draw (24,-12.5) node{Cographs};
    \draw[thick,-{Latex[length=2mm]}] (24,30) -- (24,-10);
    
    \draw [thick] (42,18) rectangle (56,23);
    \draw (49,20.5) node{Chordal};
    \draw[thick,-{Latex[length=2mm]}] (24,30) -- (49,23);
    
    \draw [thick] (36,5) rectangle (46,10);
    \draw (41,7.5) node{Split};
    \draw[thick,-{Latex[length=2mm]}] (49,18) -- (41,10);
    
    \draw [thick] (33,-20) rectangle (49,-25);
    \draw (41,-22.5) node{Threshold};
    \draw[thick,-{Latex[length=2mm]}] (41,5) -- (41,-20);
    \draw[thick,-{Latex[length=2mm]}] (24,-15) -- (40,-20);
    
    \draw [thick] (67,-5) rectangle (75,-10);
    \draw (71,-7.5) node{Tree};
    
    \draw [thick] (55,55) rectangle (87,60);
    \draw (71,57.5) node{Bounded treewidth};
    
    \draw[thick,-{Latex[length=2mm]}] (71,55) -- (71,-5);
    \draw[thick,-{Latex[length=2mm]}] (49,18) -- (70,-5);
    
    \draw [thick] (35,47) rectangle (67,52);
    \draw (51,49.5) node{Bounded pathwidth};
    
    \draw [thick] (25,39) rectangle (57,44);
    \draw (41,41.5) node{Bounded bandwidth};
    
    \draw[thick,-{Latex[length=2mm]}] (55,55) -- (51,52);
    \draw[thick,-{Latex[length=2mm]}] (51,47) -- (51,44);
    
    \draw [thick] (77,39) rectangle (91,44);
    \draw (84,41.5) node{Bipartite};
    
    
    \draw[thick,-{Latex[length=2mm]}] (84,39) -- (73,-5);
    
    \draw [thick] (102,39) rectangle (114,44);
    \draw (108,41.5) node{Planar};
    \draw[thick,-{Latex[length=2mm]}] (108,39) -- (75,-5);
    
    
    
    \draw [thick] (102,-25) rectangle (114,-30);
    \draw (108,-27.5) node{Interval};
    
    
    \draw [thick] (123,-35) rectangle (147,-40);
    \draw (135,-37.5) node{Proper Interval};
    \draw[thick,-{Latex[length=2mm]}] (114,-30) -- (123,-35);
    \draw[thick,-{Latex[length=2mm]}] (145,-25) -- (134,-35);
    
    \draw [line width=0.7mm] (110,-5) rectangle (130,-10);
    \draw (120,-7.5) node{Circular Arc};
    \draw[thick,-{Latex[length=2mm]}] (120,-10) -- (109,-25);
    
    \draw [line width=0.7mm] (121,-20) rectangle (149,-25);
    \draw (135,-22.5) node{Circular Interval};
    \draw[thick,-{Latex[length=2mm]}] (120,-10) -- (134,-20);
    \draw[thick,-{Latex[length=2mm]}] (135,5) -- (135,-20);
    
    \draw [thick] (83,-5) rectangle (105,-10);
    \draw (94,-7.5) node{Dually chordal};
    \draw[thick,-{Latex[length=2mm]}] (94,-10) -- (107,-25);
    
	\end{tikzpicture}}
    \caption{The complexity of \dsr in several graph classes. The thick rectangles are the results we show in this paper and the other ones are previously known results.} 
	\label{Intro:fig:classes}
\end{figure}

\section{Preliminaries}

Let $G = (V, E)$ be a graph. Given a vertex $v \in V$, $N(v)$ denotes the \emph{open neighborhood} of $v$, i.e. the set $\{ y \in V: vy \in E \}$.

A \emph{multiset} is defined as a set with multiplicities. In other words, in a multiset an element can appear several times. The number of times an element appears is the \emph{multiplicity} of this element. The multiplicity of an element that does not appear in the multiset is $0$. Let $A$ and $B$ be multisets. The \emph{union} of $A$ and $B$, denoted by $A\cup B$, is the multiset containing only elements of $A$ or $B$, and in which the multiplicity of each element is the sum of their multiplicities in $A$ and $B$. The \emph{difference} $A\setminus B$ denotes the multiset containing only elements of $A$, and in which the multiplicity of each element is the difference between its multiplicity in $A$ and its multiplicity in $B$ (if the result is negative then the element is not in $A\setminus B$). By abuse of language, all along this paper, we may refer to multisets as sets. 

A \emph{dominating set} $D$ of $G$ is a multiset of elements of $V$, such that for any $v\in V$, $v\in D$ or there exists $u\in D$ such that $uv\in E$.

Under the token sliding rule, a \emph{move} $v_i\leadsto v_j$, from a set $S_r$ to a set $S_{r+1}$, denotes the token sliding operation along the edge $v_iv_j$ from $v_i$ to $v_j$, i.e. $S_{r+1}=S_r\cup v_j \setminus v_i$. We say that a set $S$ \emph{is before} a set $S'$ in a reconfiguration sequence $\mathcal{S}$ if $\mathcal{S}$ contains a subsequence starting with $S$ and ending with $S'$.

\section{A polynomial time algorithm for circular arc-graphs}\label{sec:Poly}

An \emph{interval graph} $G=(V,E)$ is an intersection graph of intervals of the real line. In other words, the set of vertices is a set of real intervals $I$ and two vertices are adjacent if their corresponding intervals intersect. 
A \emph{circular arc graph} $G=(V,E)$ is an intersection graph of intervals of a circle. In other words, every vertex is associated an arc $A$ of the circle and there is an edge between two vertices if their two corresponding arcs intersect. By abuse of notation, we refer to the vertices by their image arc. Circular arc graphs strictly contain interval graphs (since long induced cycles are circular arc graphs and not interval graphs).
Bonamy et al. proved the following result in~\cite{bonamy2019dominating} that we will use as a black-box:

\begin{theorem}\label{thm:interval}[Bonamy et al.~\cite{bonamy2019conjecture}]
Let $G$ be a connected interval graph, and $D_s,D_t$ be two dominating sets of $G$ of the same size. There always exist a TS-reconfiguration sequence from $D_s$ to $D_t$.
\end{theorem}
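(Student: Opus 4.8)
The plan is to exploit the fact that the token sliding relation is symmetric: a move $u \leadsto v$ can always be undone by the reverse move $v \leadsto u$. Hence it suffices to fix, for every size $k$, a single \emph{canonical} dominating set $D^*_k$ of size $k$, and to show that every dominating set of size $k$ can be transformed into $D^*_k$ by a sequence of TS-moves. Applying this to both $D_s$ and $D_t$ (which share the same size $k$ by hypothesis) and concatenating the first sequence with the reverse of the second yields a reconfiguration sequence from $D_s$ to $D_t$. Note that the common-size hypothesis is necessary, since each TS-move preserves the number of tokens.

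To define $D^*_k$ I would use the interval representation. Write each vertex as an interval $[a_i,b_i]$ and assume, after a generic perturbation, that all endpoints are distinct. Since $G$ is connected, the union of the intervals is a single interval of the line, and the intervals chain across it without gaps. The canonical configuration is the greedy leftmost dominating set: sweep the line from left to right and, whenever the leftmost currently undominated interval is encountered, place a token on the interval that dominates it and reaches furthest to the right; any surplus tokens (when $k$ exceeds the domination number $\gamma(G)$) are parked on a fixed rightmost interval. This produces a well-defined $D^*_k$, and the furthest-reaching choice is exactly what makes the configuration stable under a left-to-right comparison.

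The core of the proof is the transformation of an arbitrary dominating set $D$ of size $k$ into $D^*_k$, which I would handle by a left-to-right sweep together with induction on the number of intervals not yet \emph{settled}. Let $I^*$ be the interval with the smallest right endpoint. Since $I^*$ must be dominated in $D$, some token of $D$ lies on an interval $K$ intersecting $I^*$; its canonical target is the furthest-reaching interval $J^*$ that dominates $I^*$. Because $K$ and $J^*$ both meet $I^*$, the walk $K \leadsto I^* \leadsto J^*$ is a short path, so the token can in principle be relocated in a couple of slides. Once the token sits on $J^*$, every interval meeting $J^*$ is permanently dominated, so I can remove the settled left portion of the graph and recurse on the remaining intervals with the remaining $k-1$ tokens, disposing of the surplus tokens at the very end.

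The step I expect to be the main obstacle is maintaining the dominating-set property throughout each slide. Moving the chosen token toward $J^*$ may momentarily uncover some interval that only that token was dominating, so the naive slide can fail. Overcoming this requires either showing that in an interval graph such an uncovered interval is re-dominated by the same token one step later, or first rerouting a second (or surplus) token to guard the threatened interval before the main token departs, and only then completing the slide — here the multiset relaxation, which permits two tokens on one vertex, is convenient. Formalizing this hand-off, proving that it terminates, and checking that it never disturbs the already-settled left portion is the technical heart of the argument; the connectivity hypothesis is used precisely to guarantee that the guarding token can reach the threatened region.
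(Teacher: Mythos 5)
First, a remark on the comparison itself: the paper does not prove this statement. Theorem~\ref{thm:interval} is imported verbatim from Bonamy et al.\ and used as a black box, so there is no in-paper proof to measure your attempt against; I can only judge the proposal on its own terms. Your framing is the standard and correct one for such connectivity results: TS-moves are reversible, so it suffices to route every size-$k$ dominating set to a canonical configuration $D^*_k$, and a greedy left-to-right canonical form is the natural choice in an interval representation.

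However, the proposal has a genuine gap, and you name it yourself: the preservation of domination during the slides is ``the technical heart of the argument'' and is left entirely unformalized, so what is written is a plan rather than a proof. Three concrete points. (a) The two-step walk $K \leadsto I^* \leadsto J^*$ is the wrong mechanism: parking the token on $I^*$ can uncover vertices that only $K$ dominated. The observation that actually closes this step is that every interval dominating $I^*$ (the interval with smallest right endpoint $b^*$) must contain the point $b^*$, so $N[I^*]$ is a clique and the furthest-reaching member $J^*$ satisfies $N[K] \subseteq N[J^*]$ for every $K \in N[I^*]$; hence the \emph{single} slide $K \leadsto J^*$ is legal and safe. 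This is precisely the neighborhood-inclusion phenomenon the present paper isolates as Lemma~\ref{CIG:lem:Inclusion}, and without it your first step is not justified. (b) Your fallback of ``rerouting a second (or surplus) token to guard the threatened interval'' is unavailable in the critical case $k=\gamma(G)$, where there are no surplus tokens and every token may be essential; the theorem must hold there too. (c) The recursion is not clean as stated: after settling a token on $J^*$, the remaining $k-1$ tokens need not lie in, nor dominate, the unsettled suffix of the line, so you still have to transport tokens rightward across the settled prefix while arguing that domination of the not-yet-settled part is never lost in transit; connectivity alone does not give this. Until (a)--(c) are carried out, the argument does not establish the theorem.
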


One can naturally wonder if Theorem~\ref{thm:interval} can be extended to circular arc graphs. The answer is negative since, for every $k$, the cycle $C_{3k}$ of length $3k$ is a circular arc graph and there are only three dominating sets of size exactly $k$ (the ones containing vertices $i$ mod $3$ for $i \in \{ 0,1,2 \}$) which are pairwise non adjacent for the TS-rule. 

However, we prove that we can decide in polynomial time if we can transform one dominating set into another.The remaining of this section is devoted to prove Theorem~\ref{CIG:main}.

Let $G=(V,E)$ be a circular arc graph and $D_s, D_t$ be two dominating sets of $G$ of the same size $k$. 

Assume first that there exists an arc $v \in V$ that contains the whole circle. So $A$ is a dominating set of $G$ and then for any two dominating sets $D_s$ and $D_t$ of $G$, we can move a token from $D_s$ to $v$, then move every other other token of $D_s$ to a vertex of $D_t$ (in at most two steps passing through $A$), and finally move the token on $v$ to the last vertex of $D_t$. Since a token is on $v$ all along the transformation, all the intermediate steps are indeed dominating sets. So if such an arc exists, there exists a reconfiguration sequence from $D_s$ to $D_t$. 

From now on we assume that no arc contains the whole circle (and that no vertex is dominating the graph).
For any arc $v\in V$, the \emph{left extremity} of $v$, denoted by $\ell(v)$, is the first extremity of $v$ we meet when we follow the circle clockwise, starting from a point outside of $v$. The other extremity of $v$ is called the \emph{right extremity} and is denoted by $r(v)$.We now construct $G_u$ from $G_u'$. First remove the vertex $u$. Note that after this deletion, no arc intersects the open interval $(\ell(u),r(u))$ so the resulting graph is an interval graph. We can unfold it in such a way the first vertex starts at position $\ell(u)$ and the last vertex ends at position $r(u)$ (see Figure~\ref{CIG:fig:CircularLinear}). We add two new vertices, $u'$ and $u''$, that correspond to each extremity of $u$. One has interval $(-\infty,\ell(u)]$ and the other has interval $[r(u),+\infty)$. Since no arc but $u'$ (resp. $u''$) intersects $(-\infty,\ell(u)]$ (resp. $[r(u),+\infty)$), we can create $(n+2)$ new vertices only adjacent to $u'$ (resp. $u''$). These $2n+4$ vertices are called the \emph{leaves} of $G_u$.

Let us first prove the following straightforward lemma.

\begin{lemma}\label{CIG:lem:Inclusion}
Let $G$ be a graph, and $u$ and $v$ be two vertices of $G$ such that $N(u)\subseteq N(v)$. If $S$ is a dominating set reconfiguration sequence in $G$, and $S'$ is obtained from $S$ by replacing every occurrence of $u$ by $v$ in the dominating sets of $S$, then $S'$ also is a dominating set reconfiguration sequence in $G$.
\end{lemma}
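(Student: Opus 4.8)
The plan is to verify directly that the transformed sequence $S'$ satisfies the two defining properties of a token-sliding reconfiguration sequence of dominating sets: every set occurring in it is a dominating set, and every two consecutive sets are related by a single legal slide (or coincide, in which case the repetition can simply be deleted). It is convenient to write $\phi$ for the substitution sending $u$ to $v$ and fixing every other vertex, so that each set of $S'$ is the image under $\phi$ of the corresponding set of $S$, and $\phi$ commutes with the multiset operations $\cup$ and $\setminus$.

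First I would check that domination is preserved. Fix a dominating set $D$ occurring in $S$ and let $D'=\phi(D)$. Passing from $D$ to $D'$ only deletes $u$ from the support and inserts $v$; every token placed on a vertex other than $u$ is left untouched. Hence any vertex of $G$ that was dominated in $D$ by a token not lying on $u$ is still dominated in $D'$. The only case that uses the hypothesis is a vertex $w$ whose sole dominator in $D$ was the token on $u$, that is $w\in N(u)$: since $N(u)\subseteq N(v)$ we get $w\in N(v)$, and because $v\in D'$ the vertex $w$ remains dominated. (The sub-case $w=u$ is the single delicate point, as it needs $u=v$ or $uv\in E$; this holds in the settings where the lemma is invoked, or one works with closed neighbourhoods.) This shows $D'$ is a dominating set.

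Second I would check that slides map to slides. Consider a move $a\leadsto b$ of $S$, so $ab\in E$ and the next set is $S_r\cup b\setminus a$. Applying $\phi$ turns this into $\phi(a)\leadsto\phi(b)$, and since $\phi$ commutes with $\cup$ and $\setminus$ the resulting set is exactly $\phi(S_{r+1})$, i.e.\ the next set of $S'$. It remains to see the step is legal. If neither endpoint is $u$, both the move and its supporting edge are unchanged. If $a=u$, then $ub\in E$ gives $b\in N(u)\subseteq N(v)$, so $vb\in E$ and $v\leadsto b$ is legal; the case $b=u$ is symmetric, using $au\in E\Rightarrow a\in N(v)$. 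If both endpoints lie in $\{u,v\}$ and are identified by $\phi$, the two consecutive sets coincide and the step is vacuous.

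The argument is short, and the genuine content is concentrated in the domination step: one must guarantee that every vertex whose domination relied on the token at $u$ is recovered by the newly inserted token at $v$, which is precisely what $N(u)\subseteq N(v)$ provides. I therefore expect this to be the main (and essentially the only) obstacle, alongside the routine bookkeeping that $\phi$ commutes with the multiset operations, so that each slide of $S$ is sent to a slide (or a trivial step) of $S'$.
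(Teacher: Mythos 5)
Your proof is correct and follows essentially the same direct-verification route as the paper, whose own argument is just two sentences (``replacing $u$ by $v$ in a dominating set keeps the domination'' and ``any move that involves $u$ can be applied if we replace it by $v$''). The one place where you go beyond the paper is the $w=u$ sub-case you flag: with open neighbourhoods, $N(u)\subseteq N(v)$ alone does not guarantee that $u$ itself stays dominated once its token moves to $v$, unless $uv\in E$ or some other token covers $u$ --- a caveat the paper's proof silently skips, but which is harmless in its applications since the lemma is only invoked for arcs/intervals with $A_u\subseteq A_v$, which intersect, so $u$ and $v$ are adjacent.
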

\begin{proof}
Every neighbor of $u$ also is a neighbor of $v$. Thus, replacing $u$ by $v$ in a dominating set keeps the domination of $G$. Moreover, any move that involves $u$ can be applied if we replace it by $v$, which gives the result.
\end{proof}

In the proof of Theorem \ref{CIG:main}, we will need the following auxiliary graph $G_u$ (see Figure~\ref{CIG:fig:CircularLinear} for an illustration of the construction). Let $u$ be a vertex of $G$ that is maximal by inclusion (no arc strictly contains it). The circular graph $G_u'$ is the graph such that, for every $v \ne u$ which is not contained in $u$, we create an arc $A_{v'}$ which is the closure of $A_v \setminus A_u$ \footnote{In other words, the arc of $v'$ is the part of the arc of $v$ that is not included in the arc of $u$. Also note that the fact that $A_v'$ is the closure of that arc ensures that that $A_u$ and $A_{v'}$ intersect.}. Since $u$ is maximal by inclusion, $v'$ is an arc. We finally add in $G_u'$ the arc of $u$.
Note that the set of edges in $G_u'$ might be smaller than the one of $G$ but any dominating set of $G$ containing $u$ is a dominating set of $G_u'$. 
We now construct $G_u$ from $G_u'$. First remove the vertex $u$. Note that after this deletion, no arc intersects the open interval $(\ell(u),r(u))$ so the resulting graph is an interval graph. We can unfold it in such a way the first vertex starts at position $\ell(u)$ and the last vertex ends at position $r(u)$ (see Figure~\ref{CIG:fig:CircularLinear}). We add two new vertices, $u'$ and $u''$, that correspond to each extremity of $u$. One has interval $(-\infty,\ell(u)]$ and the other has interval $[r(u),+\infty)$. Since no arc but $u'$ (resp. $u''$) intersects $(-\infty,\ell(u)]$ (resp. $[r(u),+\infty)$), we can create $(n+2)$ new vertices only adjacent to $u'$ (resp. $u''$). These $2n+4$ vertices are called the \emph{leaves} of $G_u$.

\begin{figure}[ht]
	\centering
	\scalebox{0.9}
	{
	\begin{tikzpicture}[scale=0.09]

	\draw (0,0) circle (17);
   \draw [thick,domain=0:100] plot ({cos(\x)*13}, {sin(\x)*13});
   \draw [very thick,domain=70:110] plot ({cos(\x)*11}, {sin(\x)*11});
    \draw [thick,domain=80:100] plot ({cos(\x)*15}, {sin(\x)*15});
   \draw [thick,domain=90:160] plot ({cos(\x)*14}, {sin(\x)*14});
   \draw [thick,domain=130:200] plot ({cos(\x)*15}, {sin(\x)*15});
   \draw [thick,domain=140:210] plot ({cos(\x)*12}, {sin(\x)*12});
   \draw [thick,domain=200:300] plot ({cos(\x)*11}, {sin(\x)*11});
   \draw [thick,domain=210:360] plot ({cos(\x)*12}, {sin(\x)*12});
   \draw [thick,domain=0:30] plot ({cos(\x)*12}, {sin(\x)*12});
   \draw [thick,domain=260:360] plot ({cos(\x)*14}, {sin(\x)*14});
    \draw [thick,domain=0:10] plot ({cos(\x)*14}, {sin(\x)*14});

    \draw[dashed] (0,0) -- ({cos(90)*20}, {sin(90)*20});
    \draw[thick] (2,11) node[below]{$u$};
    \draw (-2,18) node[left]{$-$};
    \draw (2,18) node[right]{$+$};
    
    \draw (30,-2) node[below]{$-$};
    \draw (120,-2) node[below]{$+$};
    \draw (40,0) node[above]{$u'$};
    \draw (109,0) node[above]{$u''$};
    \draw[thick] (38,0) -- (55,0);
    \draw[thick] (50,1) -- (98,1);
    \draw[thick] (60,2) -- (80,2);
    \draw[thick] (90,0) -- (112,0);
    \draw[very thick] (30,1) -- (39,1);
    \draw[very thick] (111,1) -- (120,1);
    \draw[thick] (115,2) -- (119,2);
    \draw[thick] (115,3) -- (119,3);
    \draw[thick] (115,4) -- (119,4);
    \draw[thick] (115,5) -- (119,5);
    \draw[thick] (115,6) -- (119,6);
    \draw[thick] (115,7) -- (119,7);
    \draw[thick] (115,8) -- (119,8);
    \draw[thick] (115,9) -- (119,9);
    \draw[thick] (115,10) -- (119,10);
    \draw[thick] (115,11) -- (119,11);
    \draw[thick] (115,12) -- (119,12);
    \draw[thick] (31,2) -- (35,2);
    \draw[thick] (31,3) -- (35,3);
    \draw[thick] (31,4) -- (35,4);
    \draw[thick] (31,5) -- (35,5);
    \draw[thick] (31,6) -- (35,6);
    \draw[thick] (31,7) -- (35,7);
    \draw[thick] (31,8) -- (35,8);
    \draw[thick] (31,9) -- (35,9);
    \draw[thick] (31,10) -- (35,10);
    \draw[thick] (31,11) -- (35,11);
    \draw[thick] (31,12) -- (35,12);
    \draw[thick] (45,-1) -- (60,-1);
    \draw[thick] (75,-1) -- (92,-1);
    \draw (30,-3) -- (120,-3);
    
    \draw (0,-20) node[below]{$G$};
    \draw (75,-20) node[below]{$G_u$};

	\end{tikzpicture}}
    \caption{The linear interval graph $G_u$ obtained from the circular arc graph $G$.}
	\label{CIG:fig:CircularLinear}
\end{figure}
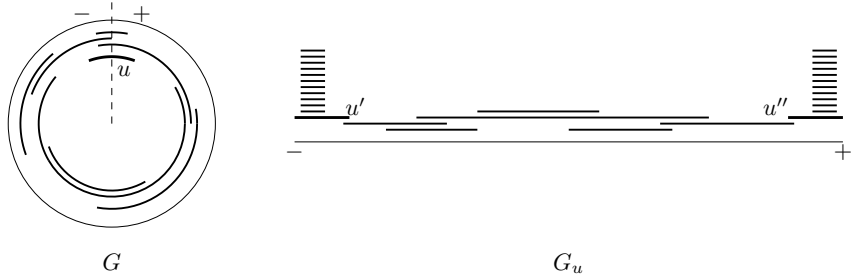

Let us first prove a couple of simple facts about dominating sets of $G_u$.

\begin{lemma}
Let $D$ be a dominating set of $G$ such that $u\in D$, and let $D_u$ be the set $D \cup \{u',u''\} \setminus \{ u \}$. The set $D_u$ is a dominating set of $G_u$.
\end{lemma}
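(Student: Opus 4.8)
The plan is to check the definition directly: I will verify that every vertex of $G_u$ is dominated by $D_u = D \cup \{u',u''\} \setminus \{u\}$, after splitting the vertices of $G_u$ into three families — the leaves, the two vertices $u'$ and $u''$, and the ``interval'' vertices $v'$ obtained by unfolding (one for each $v \neq u$ with $A_v \not\subseteq A_u$, where a token of $D$ on $v$ is read as a token of $D_u$ on $v'$).

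The two easy families come first. By construction each leaf is adjacent to exactly one of $u'$ or $u''$, and both $u'$ and $u''$ lie in $D_u$; hence every leaf is dominated, and $u',u''$ dominate themselves. So it only remains to treat the interval vertices $v'$.

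For an interval vertex $v'$ I would distinguish according to whether $A_v$ meets $A_u$. If $A_v \cap A_u \neq \emptyset$, then since $v$ is not contained in $u$ the arc $A_v$ has points both inside and outside $A_u$, so it must contain an endpoint of $u$, that is $\ell(u)$ or $r(u)$. As $v'$ is defined to be the \emph{closure} of $A_v \setminus A_u$, it still contains that endpoint, hence $v'$ intersects $u' = (-\infty,\ell(u)]$ or $u'' = [r(u),+\infty)$; since $u',u'' \in D_u$, the vertex $v'$ is dominated. If instead $A_v \cap A_u = \emptyset$, then $v$ is dominated in $G$ by some $w \in D$ with $v \in N[w]$, and necessarily $w \neq u$ because $v$ is disjoint from $u$. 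Any point of $A_v \cap A_w$ then lies outside $A_u$, so it survives in both $\overline{A_v\setminus A_u}=v'$ and $\overline{A_w\setminus A_u}=w'$; this also forces $A_w \not\subseteq A_u$, so $w'$ is a genuine vertex of $G_u$ and belongs to $D_u$, and it dominates $v'$.

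The only delicate point, and the step I would write out most carefully, is the case $A_v \cap A_u \neq \emptyset$: one must argue cleanly that an arc meeting $u$ but not contained in $u$ necessarily contains $\ell(u)$ or $r(u)$, and that taking the closure in the definition of $v'$ genuinely preserves that endpoint, so that the intended adjacency to $u'$ or $u''$ is really created (this is where the edge set of $G_u$ could otherwise fail to make up for edges destroyed by deleting $A_u$). Everything else rests on the two routine observations that a domination witnessed by an intersection lying outside $A_u$ is preserved when passing to $G_u$, and that $u'$ together with $u''$ take over the domination role played by $u$ on the two sides of the cut.
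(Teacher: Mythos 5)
Your proof is correct and follows essentially the same route as the paper's: the new vertices (leaves, $u'$, $u''$) are trivially dominated, arcs meeting $A_u$ are picked up by $u'$ or $u''$ because the closure in the definition of $v'$ retains an endpoint of $A_u$, and arcs disjoint from $A_u$ keep their dominating witness since the relevant intersection lies outside $A_u$. Your version is in fact somewhat more careful than the paper's three-sentence argument, in particular in checking that the witness $w$ survives as a vertex of $G_u$ and that the witnessing intersection is preserved.
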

\begin{proof}
Every vertex of $N(u)$ in the original graph $G$ is either not in $G_u$, or is dominated by $u'$ or $u''$. The neighborhood of all the other vertices have not been modified. Moreover, all the new vertices are dominated since they are all adjacent to $u'$ or $u''$. 
\end{proof}

Note that $D_u$ has size $|D|+1$.

\begin{lemma}\label{CIG:lem:Gu2}
The following holds:
\begin{enumerate}
    \item[(i)] All the dominating sets of $G_u$ of size $|D|+1$ contain $u'$ and $u''$.
    \item[(ii)] For every dominating set $X$ of $G_u$ of size $|D|+1$, $(X \cap V) \cup \{ u \}$ is a dominating set of $G$ of size at most $|D|$. 
    \item[(iii)] Every reconfiguration sequence in $G_u$ between two dominating sets $D_s,D_t$ of $G_u$ of size at most $|D|+1$ that does not contain any leaf can be adapted into a reconfiguration sequence in $G$ between $(D_s \setminus \{u',u''\}) \cup \{ u \}$ and $(D_t \setminus \{u',u''\}) \cup \{ u \}$.
\end{enumerate}
\end{lemma}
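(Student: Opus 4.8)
The plan is to prove the three parts in order, exploiting the combinatorial role of the gadget vertices $u'$, $u''$ and of the leaves. For part~(i) I would count tokens on the leaves: each of the $n+2$ leaves attached to $u'$ has $u'$ as its unique neighbour, so a dominating set $X$ of $G_u$ that avoids $u'$ must contain \emph{all} $n+2$ of these leaves, forcing $|X|\ge n+2$. Since $|X|=|D|+1$ and $|D|\le n$, this is impossible, so $u'\in X$; the symmetric argument gives $u''\in X$.

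For part~(ii), part~(i) already yields the size bound: as $u',u''\in X$ and these are not vertices of $G$, at most $|D|-1$ tokens of $X$ sit on interval vertices, so $(X\cap V)\cup\{u\}$ has size at most $|D|$. For domination I would check every $w\in V$. If $A_w\subseteq A_u$ or $w=u$, then $w$ is dominated by $u$. Otherwise $w'$ is a vertex of $G_u$, dominated there by some token of $X$: if $w'\in X$ then $w$ is chosen; if a neighbour $v'\in X$ dominates $w'$, then since $A_{v'}\subseteq A_v$ and $A_{w'}\subseteq A_w$ the witnessing point of $A_{v'}\cap A_{w'}$ lies in $A_v\cap A_w$, so $vw\in E$; and if $w'$ is dominated by $u'$ or $u''$, then $A_{w'}$ reaches $\ell(u)$ (resp.\ $r(u)$), a point of $A_u$, so $wu\in E$. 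The one thing to be careful about is that replacing $A_v$ by the closure of $A_v\setminus A_u$ creates no spurious adjacencies, which is exactly why intersections in $G_u$ project back to intersections in $G$.

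For part~(iii), the substantive part, I would first note that token sliding preserves cardinality, so every set of the sequence has one common size $s\le|D|+1$; the case used is $s=|D|+1$, where part~(i) puts $u'$ and $u''$ in every set while, by hypothesis, no leaf ever appears. I would then apply the projection $\pi$ sending $u'$ and $u''$ to $u$ and each $v'$ to $v$, keeping multiplicities. No move touches a leaf (otherwise a leaf would occur in some set), and no move slides between $u'$ and $u''$ (they are non-adjacent); the adjacency facts from part~(ii) then give $\pi(a)\pi(b)\in E$ whenever $\pi(a)\neq\pi(b)$, so $\pi$ turns the whole sequence into a legal TS-sequence of $G$ of size $|D|+1$.

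The remaining, and main, obstacle is the cardinality mismatch: the target sets have size $|D|$, one less, because $u'$ and $u''$ must be merged into a single token on $u$, and this cannot be done set-by-set since a token may temporarily pile up on $u'$ or $u''$ and later slide off. The clean fix is that part~(i) keeps $u',u''$ occupied throughout, so every $\pi$-image carries at least two tokens on $u$; deleting one ever-present token on $u$ from all sets simultaneously yields dominating sets of size $|D|$ whose consecutive pairs still differ by a legal slide (a short case check according to whether the slid token is $u$). Since $D_s$ and $D_t$ each carry exactly one $u'$ and one $u''$, the resulting sequence runs between $(D_s\setminus\{u',u''\})\cup\{u\}$ and $(D_t\setminus\{u',u''\})\cup\{u\}$, as required.
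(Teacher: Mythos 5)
Your proof is correct and follows essentially the same route as the paper: the leaf-counting argument for (i), projecting tokens and adjacencies back to $G$ for (ii), and a move-by-move translation of the sequence for (iii). If anything, your treatment of the cardinality drop in (iii) --- merging the two ever-present tokens on $u'$ and $u''$ into a single token on $u$ and checking that each slide survives the deletion --- is more explicit than the paper's rather terse argument for that step.
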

\begin{proof}
\noindent\textbf{Proof of (i).} 
The point (i) holds since there are $n+2$ leaves attached to each of $u'$ and $u''$ and that $|D|\leq n$.
\smallskip

\noindent\textbf{Proof of (ii).}
 The vertices $u'$ and $u''$ only dominate vertices of $V$ dominated by $u$ in $G$ and $u'$ and $u''$ are in any dominating set of size at most $|D|+1$ of $G_u$ by (i). Moreover no edge between two vertices $x,y \in V(G)$ was created in $G_u$. Thus $(X \cap V) \cup \{ u \}$ is a dominating set of $G$ since the only vertices of $V(G)$ that are not in $V(G_u)$ are vertices whose arcs are strictly included in $u$ and then are dominated by $u$.
\smallskip

\noindent\textbf{Proof of (iii).}
By Lemma~\ref{CIG:lem:Inclusion}, we can assume that there is no token on $u'$ or $u''$ at any point.
We show that we can adapt the transformation. If the move $x \leadsto y$ satisfies that $x,y \notin \{u',u''\}$ then the same edge exists in $G$ and by (ii), the resulting set is dominating. So we can assume that $x$ or $y$ are $u'$ or $u''$. 
We simply have to slide from or to $u$ since $N(u')$ and $N(u'')$ minus the leaves is equal to $N(u)$. Since there is never a token on the leaves, the conclusion follows.
\end{proof}

By Lemma~\ref{CIG:lem:Gu2} and Theorem~\ref{thm:interval}, we immediately obtain the following corollary:

\begin{corollary}\label{coro:circular_u}
Let $G$ be a circular interval graph, $u \in V(G)$, and $k$ be an integer. All the $k$-dominating sets of $G$ containing $u$ are in the same connected component of the reconfiguration graph.
\end{corollary}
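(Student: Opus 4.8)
The plan is to reduce the statement to Theorem~\ref{thm:interval} by passing to the auxiliary interval graph $G_u$. Fix the integer $k$ and let $D_s$ and $D_t$ be two $k$-dominating sets of $G$ both containing $u$; I want to build a TS-reconfiguration sequence in $G$ between them. First I would dispose of the degenerate situations: we may assume $G$ is connected (otherwise we argue component by component) and that the arcs cover the whole circle, since if some point of the circle is uncovered we can cut there and view $G$ as an interval graph, for which Theorem~\ref{thm:interval} applies directly. Assuming the circle is fully covered is precisely what guarantees that the interval part of $G_u$ has no gap, so that $G_u$ is a connected interval graph — the hypothesis Theorem~\ref{thm:interval} requires.

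Next, using the lemma preceding Lemma~\ref{CIG:lem:Gu2}, I would form the sets $D_{s,u} := (D_s \setminus \{u\}) \cup \{u',u''\}$ and $D_{t,u} := (D_t \setminus \{u\}) \cup \{u',u''\}$. These are dominating sets of $G_u$, each of size $k+1$, and each involving only non-leaf vertices. Since $G_u$ is a connected interval graph and the two sets have the same size, Theorem~\ref{thm:interval} yields a TS-reconfiguration sequence $S$ between them in $G_u$. By Lemma~\ref{CIG:lem:Gu2}(i), every set along $S$ has size $k+1$ and hence contains both $u'$ and $u''$.

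The remaining step, which is the only real subtlety, is to make $S$ leaf-free so that Lemma~\ref{CIG:lem:Gu2}(iii) applies. A leaf $f$ is adjacent only to $u'$ or to $u''$, one of which is present in every set of $S$ by (i) and dominates all of its leaves; hence no set of $S$ ever needs a token on a leaf. Concretely, I would reroute $S$ in the spirit of Lemma~\ref{CIG:lem:Inclusion}: replace every token placed on a leaf $f$ by a token on its unique neighbor ($u'$ or $u''$), and collapse the moves $u' \leadsto f$ (resp. $u'' \leadsto f$) and their reverses, which become null moves after the substitution. Each replacement keeps a dominating set (the neighbor already lies in the set) and keeps every slide valid, so I obtain a leaf-free sequence $S'$ in $G_u$ between the same two endpoints. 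Applying Lemma~\ref{CIG:lem:Gu2}(iii) to $S'$ then gives a TS-reconfiguration sequence in $G$ between $(D_{s,u}\setminus\{u',u''\})\cup\{u\}=D_s$ and $D_t$; as $D_s,D_t$ were arbitrary, all $k$-dominating sets of $G$ containing $u$ lie in one connected component of the reconfiguration graph.

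The step I expect to be the main obstacle is guaranteeing that Theorem~\ref{thm:interval} is applicable, i.e. that $G_u$ is connected. This is exactly why the reduction to the fully-covered circle matters: if the interval part of $G_u$ could split, then two dominating sets of $G$ distributing their tokens differently on the two sides of $u$ would correspond to sets of $G_u$ lying in different components, hence unreachable from one another inside $G_u$ — even though in $G$ they remain reconfigurable by shuttling a token through $u$, a move that the splitting of $u$ into $u'$ and $u''$ destroys. Ensuring full coverage removes this gap and collapses everything into a single interval-graph component, after which the three lemmas chain together cleanly.
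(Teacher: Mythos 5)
Your proof is correct and follows essentially the same route as the paper, which derives the corollary directly from Theorem~\ref{thm:interval} applied to $G_u$ together with Lemma~\ref{CIG:lem:Gu2}. In fact you supply two details the paper leaves implicit --- that $G_u$ is a connected interval graph (via the reduction to the fully-covered circle) and the leaf-elimination needed to invoke Lemma~\ref{CIG:lem:Gu2}(iii) --- so no changes are needed.
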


We now have all the ingredients to prove Theorem~\ref{CIG:main}.

\begin{proof}[Proof of Theorem~\ref{CIG:main}]
Let $G=(V,E)$ be a circular arc graph, and let $D_s$ and $D_t$ be two dominating sets of $G$. 
Free to slide tokens, we can assume that all the intervals of $D_s$ and $D_t$ are maximal by inclusion. Moreover, by Lemma \ref{CIG:lem:Inclusion}, we can assume that all the vertices of all the dominating sets we will consider are maximal by inclusion. 
By abuse of notation, we say that in $G$, an arc $v$ is the \emph{first arc on the left} (resp. \emph{on the right}) of another arc $u$ if the first left extremity of an inclusion-wise maximal arc (of $G$, or of the stated dominating set) we encounter when browsing the circle counter clockwise (resp. clockwise) from the left extremity of $u$ is the one of $v$. 
In interval graphs, we say that an interval $v$ is at \emph{the left} (resp. at \emph{the right}) of an interval $u$ if the left extremity of $v$ is smaller (resp. larger) than the one of $u$. Note that since the intervals of the dominating sets are maximal by inclusion, the left and right ordering of these vertices are the same. So we can assume that we have a total ordering of the vertices of the dominating sets we are considering.
\smallskip

Let $u_1$ be a vertex of $D_s$. Let $v$ be the first vertex at the right of $u_1$ in $D_t$. We perform the following algorithm, called the Right Sliding Algorithm. By Lemma~\ref{CIG:lem:Gu2}, all the dominating sets of size $|D_s|+1$ in $G_{u_1}$ contain $u_1'$ and $u_1''$. Let $D_2'$ be a dominating set of the interval graph $G_{u_1}$ of size $|D_s|+1$, such that the first vertex at the right of $u_1'$ has the smallest left extremity (we can indeed find such a dominating set in polynomial time). By Theorem~\ref{thm:interval}, there exists a transformation from $(D_s \cup \{ u_1',u_1'' \}) \setminus \{ u_1 \}$ to $D_2'$ in $G_{u_1}$. And thus by Lemma~\ref{CIG:lem:Gu2}, there exists a transformation from $D_s$ to $D_2:= (D_2' \cup \{ u_1 \}) \setminus \{ u_1',u_1''\}$ in $G$. We apply this transformation. Informally speaking, this transformation has permitted to move the token at the left of $u$ closest from $u$ that will hopefully permit to push the token on $u$ to the right.

Now, we fix all the vertices of $D_2$ but $u_1$, and we try to slide the token on $u_1$ to its right. If we can push it on a vertex at the right of $v$, we can in particular push it on $v$ (since $v$ is maximal by inclusion) and keep a dominating set. So we set $u_2=v$ if we can reach $v$ or the rightmost possible vertex maximal by inclusion we can reach otherwise.
We now repeat these operations with $u_2$ instead of $u_1$, i.e. we apply a reconfiguration sequence towards a dominating set of $G$ in which the first vertex on the left of $u_2$ is the closest to $u_2$, then try to slide $u_2$ to the right, onto $u_3$. We repeat these operations until $u_i=u_{i+1}$ (i.e. we cannot move to the right anymore) or until $u_i=v$. Let $u_1,\ldots,u_\ell$ be the resulting sequence of vertices. Note that this algorithm is indeed polynomial since after at most $n$ steps we have reached $v$ or reached a fixed point.

We can similarly define the Left Sliding Algorithm by replacing the leftmost dominating set of $G_{u_i}$ by the rightmost, and then slide $u_i$ to the left for any $i$. We stop when we cannot slide to the left anymore, or when $u_i=v'$, where $v'$ is the first vertex at the left of $u_1$ in $D_t$. Let $u_\ell '$ be the last vertex of the sequence of vertices given by the Left Sliding Algorithm.

To conclude the proof we simply have to show the following claim: 

\begin{claim}
There exists a transformation from $D_s$ to $D_t$ if and only if $u_\ell=v$ or $u_\ell'=v'$.
\end{claim}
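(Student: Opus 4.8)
\textbf{The plan} is to establish both directions of the equivalence, treating the reverse direction as essentially immediate and concentrating effort on the forward direction. For the reverse direction, suppose $u_\ell = v$. This means the Right Sliding Algorithm successfully slid a token from the starting position $u_1 \in D_s$ all the way to $v$, the first vertex of $D_t$ to the right of $u_1$. The point is that each phase of the algorithm produces a genuine reconfiguration sequence in $G$ (by Lemma~\ref{CIG:lem:Gu2}(iii) combined with Theorem~\ref{thm:interval}), so reaching $u_\ell = v$ certifies that $D_s$ can be transformed into a dominating set in which one token sits on $v$. From there, since $v \in D_t$, one should argue that the remaining $k-1$ tokens can be repositioned onto the other vertices of $D_t$ while the token on (or near) $v$ keeps the relevant region dominated; the symmetric statement holds when $u_\ell' = v'$. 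So the reverse direction amounts to checking that a successful slide lets us ``anchor'' a token at a vertex of $D_t$ and then finish using the interval-graph freedom of Theorem~\ref{thm:interval}.

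\textbf{The forward direction} is where the real content lies, and I would argue its contrapositive: if $u_\ell \neq v$ \emph{and} $u_\ell' \neq v'$, then no transformation from $D_s$ to $D_t$ exists. The key structural idea is that $u_1$ is a token of $D_s$ that, in any reconfiguration sequence, must end up somewhere in $D_t$; and to get there it must slide either to the right (eventually reaching some vertex at or past $v$) or to the left (reaching some vertex at or past $v'$), since $v$ and $v'$ are precisely the nearest $D_t$-vertices on each side. The Right and Left Sliding Algorithms are designed to compute the \emph{rightmost} and \emph{leftmost} positions to which this token can be pushed while maintaining domination, \emph{after optimally clearing the way} by first moving the neighboring token as close as possible (the step that builds $D_2$ from $D_s$). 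The crux is therefore an optimality/monotonicity claim: the algorithm never gives up a reachable position. That is, I must show that $u_\ell$ is genuinely the rightmost position reachable by \emph{any} sequence of moves of this token, not merely by the greedy one the algorithm performs.

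\textbf{The main obstacle} I anticipate is precisely justifying this global optimality of the greedy algorithm. The algorithm fixes all other tokens, pulls in the left neighbor as tightly as possible via $G_{u_i}$, and slides right; but an arbitrary reconfiguration sequence is allowed to move \emph{several} tokens in an interleaved fashion, so I cannot a~priori assume the other tokens stay put. The argument I would pursue is an exchange/normalization lemma: given any transformation from $D_s$ to $D_t$, one can reorder or restructure the moves so that the designated token advances monotonically to the right (or to the left), and so that the configuration of the remaining tokens is never more favorable than what the greedy step achieves. Here Corollary~\ref{coro:circular_u} is the essential tool: once a token is anchored at $u_i$, \emph{all} $k$-dominating sets of $G$ containing $u_i$ lie in a single connected component, so the positions of the other tokens can always be freely rearranged to the configuration that pulls the left neighbor closest. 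This lets me reduce any adversarial sequence to one matching the greedy behavior at each step, and thereby conclude that if the greedy algorithm halts at $u_\ell \neq v$ (a genuine fixed point, where the token cannot slide further right even with the neighbor optimally placed), then no sequence can advance this token to $v$ either.

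\textbf{Putting it together}, I would structure the write-up as: (1) a short paragraph for the easy reverse direction, anchoring a token at $v$ or $v'$ and invoking Theorem~\ref{thm:interval}; (2) the normalization lemma showing any transformation can be assumed to move the distinguished token monotonically with the remaining tokens in their greedily-optimal configuration, using Corollary~\ref{coro:circular_u}; and (3) the contrapositive conclusion that a fixed point of the greedy algorithm that is neither $v$ nor $v'$ blocks every transformation. The delicate point to get right is that ``reaching a vertex at the right of $v$'' already suffices, since $v$ is maximal by inclusion and hence dominates everything its would-be successor does — so the stopping condition $u_\ell = v$ correctly captures reachability of $D_t$ on the right side.
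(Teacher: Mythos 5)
Your backward direction matches the paper: once $u_\ell=v$ (or $u_\ell'=v'$), both $D_\ell$ and $D_t$ contain a common vertex of $D_t$, and Corollary~\ref{coro:circular_u} finishes. The forward direction, however, has a genuine gap. You correctly identify the crux --- that the greedy fixed point must block \emph{every} interleaved sequence, not just the greedy one --- but you propose to resolve it with an ``exchange/normalization lemma'' asserting that any transformation can be reordered so that the distinguished token advances monotonically with the other tokens in their greedily-optimal configuration. You never prove this lemma, you flag it yourself as the main obstacle, and it is not at all clear it holds: reordering token-sliding moves is exactly the kind of step that fails in general, since intermediate sets must remain dominating and the validity of a slide depends on where \emph{all} the other tokens currently sit. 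As written, the hard half of the claim rests entirely on this unestablished statement.

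The paper avoids any global normalization with a purely local ``first crossing'' argument, and this is the idea your proposal is missing. Assuming a transformation exists with $u_\ell\neq v$ and $u_\ell'\neq v'$, one takes the \emph{first} dominating set $C$ in which the token originally on $u_1$ lies strictly to the right of $u_\ell$ (or strictly to the left of $u_\ell'$); such a set exists because $D_t$ has no vertex between $u_\ell'$ and $u_\ell$. The preceding move $x\leadsto y$ then has $x$ to the left of $u_\ell$ and $y$ to the right, and since all arcs in play are maximal by inclusion and $u_\ell$ lies between $x$ and $y$, the substitution $C\setminus\{x\}\cup\{u_\ell\}$ is still a dominating set. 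This exhibits a configuration containing $u_\ell$ from which $u_\ell$ can slide right --- and since the algorithm's configuration (left neighbor pulled as close as possible, reachable by Corollary~\ref{coro:circular_u}) is at least as favorable, this contradicts $u_\ell$ being a fixed point. No claim about monotonizing or reordering the adversarial sequence is ever needed; only the single move that crosses $u_\ell$ is examined. To repair your proof you would need either to supply a full proof of your normalization lemma (a substantial and doubtful undertaking) or to replace it with this substitution-at-first-crossing argument.
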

\begin{proofclaim}
Firstly, if $u_\ell=v$, then Corollary \ref{coro:circular_u} ensures that there exists a transformation from $D_\ell$ to $D_t$ and thus from $D_s$ to $D_t$, and similarly if $u_\ell'=v'$. 

Let us now prove the converse direction. If $u_\ell\neq v$ and $u_\ell' \neq v'$, assume for contradiction that there exists a transformation sequence $S$ from $D_s$ to $D_t$. By Lemma~\ref{CIG:lem:Inclusion} we can assume that all the vertices in any dominating set of $S$ are maximal by inclusion. 

Let us consider the first dominating set $C$ of $S$ where the token initially on $u_1$ is at the right of $u_\ell$ in $G$, or at the left of $u_\ell'$ in $G$. Such a dominating set exists no token of $D_t$ is between $u'_{\ell'}$ and $u_\ell$. Let us denote by $C'$ the dominating before $C$ in the sequence and $x \leadsto y$ the move from $C$ to $C'$.
By symmetry, we can assume that $y$ is at the right of $u_\ell$. Note that $x$ is at the left of $u_\ell$. 
Note that $C''=C \setminus \{ x \} \cup \{u_\ell \}$ is a dominating set of $G$ since $C$ and $C'=C \setminus \{ x \} \cup \{y \}$ are dominating sets and $u_\ell$ is between $x$ and $y$.

So $C \setminus \{ x\} \cup \{u_\ell,u_\ell'\}$ is a dominating set of $G_{u_\ell}$ and then for $C''$ it was possible to move the token on $u_\ell$ to the right, a contradiction with the fact that $u_\ell$ was a fixed point. 
\end{proofclaim}
\end{proof}

\section{PSPACE-hardness for Circle Graphs}\label{sec:CircleGraphs}

A \emph{circle graph} $G=(V,E)$ is an intersection graph of chords of a circle (i.e. segments between two points of a circle). Let $C$ be a circle. Equivalently, we can associate to each vertex of  a circle graph two points of $C$. And there is an edge between two vertices if the chords between their pair of points intersect. Again equivalently, a circle graph can be represented on the real line. We associate to each vertex an interval of the real line; and there is an edge between two vertices if their intervals intersect but do not overlap. In this section, we will use the last representation of circle graphs. For every interval $I$, $\ell(I)$ will denote the \emph{left extremity} of $I$, and $r(I)$ the \emph{right extremity} of $I$.

The goal of this section is to show that \dsr is \psc in circle graphs. We provide a polynomial time reduction from \satr to \dsrnospace. This reduction is inspired from one used in~\cite{keil1993complexity} to show that the minimum dominating set problem is \npc on circle graphs but has to be adapted in the reconfiguration framework.
The \satr problem is defined as follows:
\smallskip

\noindent \satrfull (\satr)\\
\textbf{Input:} A Boolean formula $F$ in conjunctive normal form (conjunction of clauses), two variable assignments $A_s$ and $A_t$ that satisfy $F$.\\
\textbf{Output:} Does there exist a reconfiguration sequence from $A_s$ to $A_t$ that keeps $F$ satisfied, where the operation consists in a \emph{variable flip}, i.e. the change of the assignment of exactly one variable from $x=0$ to $x=1$, or conversely ?
\smallskip

Let $(F,A_s,A_t)$ be an instance of the \satr problem. Let $x_1\,\ldots,x_n$ be the variables of the boolean formula $F$. Since $F$ is in conjunctive normal form, it is a conjunction of \emph{clauses} $c_1,\ldots,c_m$ which are disjunctions of literals. A \emph{literal} is a variable or the negation of a variable, and we denote by $x_i\in c_j$ (resp. $\overline{x_i}\in c_j$) the fact that $x_i$ (resp. the negation of $x_i$) is a literal of $c_j$. Since duplicating clauses does not modify the satisfiability of a formula, we can assume without loss of generality that $m$ is a multiple of $4$. We can also assume that for every $i \le n$ and $j \le m$, and that, for every $i,j$, $x_i$ or $\overline{x_i}$ are not in $c_j$ (since otherwise the clause is satisfied for any possible assignment and can be removed from the boolean formula).

\subsection{The reduction.}
Let us construct an instance $(G_F,D_F(A_s),D_F(A_t))$ of the \dsr problem from $(F,A_s,A_t)$. We start by constructing the circle graph $G_F$ from $F$. All along this construction, we repeatedly refer to real number as \emph{points}. We say that a point $p$ is \emph{at the left} of a point $q$ (or $q$ is \emph{at the right} of $p$) if $p<q$. We say that $p$ is \emph{just at the left} of $q$, (or $q$ is \emph{just at the right} of $p$) if $p$ is at the left of $q$, and no interval defined so far has an extremity in $[p,q]$. 
Finally, we say that an interval $I$ \emph{frames} a set of points $P$ if $\ell(I)$ is just at the left of the minimum of $P$ and $r(I)$ is just at the right of the maximum of $P$.

One can easily check that by adding an interval that frames one extremity of the interval of a vertex $u$ of a graph $H$, we add one vertex to $H$ which is only connected to $u$. So:

\begin{remark}\label{rk:AdjInterval}
If $H$ is a circle graph and $u$ is a vertex of $H$, then the graph $H$ plus a new vertex only connected to $u$ is circle graph.
\end{remark}


We construct $G_F$ step by step. The construction of $G_F$ is quite technical and will be performed step by step. The construction is inspired from~\cite{keil1993complexity}. In~\cite{keil1993complexity}, the authors have decided to give the coordinates of the endpoints of all the intervals. For the sake of readability, we think that it is easier to only give the relative positions of the intervals between them. 

Each step consists in creating new intervals, and in giving their relative positions regarding to the previously constructed intervals. We also outline some of the edges and non edges in $G_F$ that have an impact on the upcoming proofs\footnote{Some adjacencies between intervals that will be anyway dominated for some reasons that will become clear later on will not be discussed.}. Figures \ref{CG:fig:varintervals}, \ref{CG:fig:allintervals} and \ref{CG:fig:allintervalszoom} will illustrate the positions of the intervals of $G_F$.

For each variable $x_i$, we create $m$ \emph{base intervals} $B_j^i$ where $1\leq j\leq m$. The base intervals $B_j^i$ are pairwise disjoint for any $i$ and $j$, and are ordered by increasing $i$, then increasing $j$ for a same $i$. 

For each variable $x_i$, we then create $\frac{m}{2}$ intervals $X_j^i$ called the \emph{positive bridge intervals} of $x_i$, and $\frac{m}{2}$ intervals $\overline{X}_j^i$ called the \emph{negative bridge intervals} of $x_i$, where $1\leq j\leq \frac{m}{2}$. A \emph{bridge interval} is a positive or a negative bridge interval. Let us give the positions of these intervals. They are illustrated in Figure \ref{CG:fig:varintervals}.

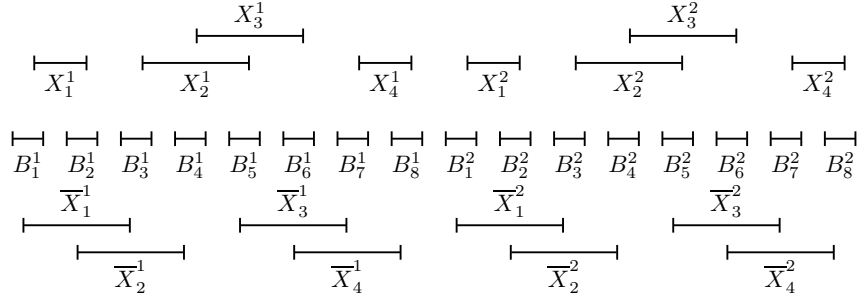
\begin{figure}[ht]
	\centering
	\scalebox{0.9}
	{
	\begin{tikzpicture}[scale=0.04]
    
    \draw (5,0) node[below]{$B_1^1$};
    \draw[|-|,thick] (-1,2) -- (11,2);
    \draw (25,0) node[below]{$B_2^1$};
    \draw[|-|,thick] (19,2) -- (31,2);
    \draw (45,0) node[below]{$B_3^1$};
    \draw[|-|,thick] (39,2) -- (51,2);
    \draw (65,0) node[below]{$B_4^1$};
    \draw[|-|,thick] (59,2) -- (71,2);
    \draw (85,0) node[below]{$B_5^1$};
    \draw[|-|,thick] (79,2) -- (91,2);
    \draw (105,0) node[below]{$B_6^1$};
    \draw[|-|,thick] (99,2) -- (111,2);
    \draw (125,0) node[below]{$B_7^1$};
    \draw[|-|,thick] (119,2) -- (131,2);
    \draw (145,0) node[below]{$B_8^1$};
    \draw[|-|,thick] (139,2) -- (151,2);
    \draw (165,0) node[below]{$B_1^2$};
    \draw[|-|,thick] (159,2) -- (171,2);
    \draw (185,0) node[below]{$B_2^2$};
    \draw[|-|,thick] (179,2) -- (191,2);
    \draw (205,0) node[below]{$B_3^2$};
    \draw[|-|,thick] (199,2) -- (211,2);
    \draw (225,0) node[below]{$B_4^2$};
    \draw[|-|,thick] (219,2) -- (231,2);
    \draw (245,0) node[below]{$B_5^2$};
    \draw[|-|,thick] (239,2) -- (251,2);
    \draw (265,0) node[below]{$B_6^2$};
    \draw[|-|,thick] (259,2) -- (271,2);
    \draw (285,0) node[below]{$B_7^2$};
    \draw[|-|,thick] (279,2) -- (291,2);
    \draw (305,0) node[below]{$B_8^2$};
    \draw[|-|,thick] (299,2) -- (311,2);
    
    \draw (23,-30) node[above]{$\overline{X}_1^1$};
    \draw[|-|,thick] (3,-30) -- (43,-30);
    \draw (103,-30) node[above]{$\overline{X}_3^1$};
    \draw[|-|,thick] (83,-30) -- (123,-30);
    \draw (183,-30) node[above]{$\overline{X}_1^2$};
    \draw[|-|,thick] (163,-30) -- (203,-30);
    \draw (263,-30) node[above]{$\overline{X}_3^2$};
    \draw[|-|,thick] (243,-30) -- (283,-30);
    
    \draw (43,-40) node[below]{$\overline{X}_2^1$};
    \draw[|-|,thick] (23,-40) -- (63,-40);
    \draw (123,-40) node[below]{$\overline{X}_4^1$};
    \draw[|-|,thick] (103,-40) -- (143,-40);
    \draw (203,-40) node[below]{$\overline{X}_2^2$};
    \draw[|-|,thick] (183,-40) -- (223,-40);
    \draw (283,-40) node[below]{$\overline{X}_4^2$};
    \draw[|-|,thick] (263,-40) -- (303,-40);
    
     \draw (17,30) node[below]{$X_1^1$};
    \draw[|-|,thick] (7,30) -- (27,30);
    \draw (137,30) node[below]{$X_4^1$};
    \draw[|-|,thick] (127,30) -- (147,30);
    \draw (177,30) node[below]{$X_1^2$};
    \draw[|-|,thick] (167,30) -- (187,30);
    \draw (297,30) node[below]{$X_4^2$};
    \draw[|-|,thick] (287,30) -- (307,30);
    
     \draw (67,30) node[below]{$X_2^1$};
    \draw[|-|,thick] (47,30) -- (87,30);
     \draw (227,30) node[below]{$X_2^2$};
    \draw[|-|,thick] (207,30) -- (247,30);

    \draw (87,40) node[above]{$X_3^1$};
    \draw[|-|,thick] (67,40) -- (107,40);
     \draw (247,40) node[above]{$X_3^2$};
    \draw[|-|,thick] (227,40) -- (267,40);
	\end{tikzpicture}}
    \caption{The base, positive and negative bridge intervals obtained with $n=2$ and $m=8$.}
	\label{CG:fig:varintervals}
\end{figure}

Let $q$ be such that $m=4q$. For every $i$ and every $0 \le r < q$, the interval $\overline{X}_{2r+1}^i$ starts just at the right of $\ell(B_{4r+1}^i)$ and ends just at the right of $\ell(B_{4r+3}^i)$, and $\overline{X}_{2r+2}^i$ starts just at the right of $\ell(B_{4r+2}^i)$ and ends just at the right of $\ell(B_{4r+4}^i)$. The interval $X_1^i$ starts just at the left of $r(B_1^i)$ and ends just at the left of $r(B_2^i)$. For every $1 \le r < q$, the interval $X_{2r}^i$ starts just at the left of $r(B_{4r-1}^i)$ and ends just at the left of $r(B_{4r+1}^i)$, and $X_{2r+1}^i$ starts just at the left of $r(B_{4r}^i)$ and ends just at the left of $r(B_{4r+2}^i)$. Finally, $X_{\frac{m}{2}}^i$ starts just at the left of $r(B_{m-1}^i)$ and ends just at the left of $r(B_m^i)$.




Let us outline some of the edges induced by these intervals. 
Base intervals are pairwise non adjacent.
Moreover, every positive (resp. negative) bridge interval is incident to exactly two base intervals; And all the positive (resp. negative) bridge intervals of $x_i$ are incident to pairwise distinct base intervals. In particular, the positive (resp. negative) bridge intervals dominate the base intervals; And every base interval is adjacent to exactly one positive and one negative bridge interval. 
All the positive (resp. negative) bridge intervals but $X_1^i$ and $X_{\frac{m}{2}}^i$ have exactly one other positive (resp. negative) bridge interval neighbor. Finally, for every $i$, every negative bridge interval $\overline{X}_j^i$ has exactly two positive bridge interval neighbors which are $X_{j-1}^i$ and $X_{j}^i$ except for $\overline{X}_1^i$ which does not have any for any $i$. Note that a bridge interval of $x_i$ is not adjacent to a bridge interval or a base interval of $x_j$ for $j \ne i$.
\smallskip

Now for any clause $c_j$, we create two identical \emph{clause intervals} $C_j$ and $C_j'$. In this paper, we consider that two identical intervals do overlap, so that $C_j$ and $C_j'$ are not adjacent. The clause intervals $C_j$ are pairwise disjoint and ordered by increasing $j$, and we have $\ell(C_1)>r(B_m^n)$. Thus, they are not adjacent to any interval constructed so far.

For every $j$ such that $x_i$ is in the clause $c_j$, we create four intervals  $T_j^i$, $U_j^i$, $V_j^i$ and $W_j^i$, called the \emph{positive path intervals} of $x_i$; and for every $j$ such that $\overline{x_i}$ is in the clause $c_j$, we create four intervals $\overline{T}_j^i$, $\overline{U}_j^i$, $\overline{V}_j^i$ and $\overline{W}_j^i$, called the \emph{negative path intervals} of $x_i$. These intervals are represented in Figure \ref{CG:fig:allintervals}. In order to give a better representation of the relative position of the extremities, a zoom on that part of the graph is proposed in Figure \ref{CG:fig:allintervalszoom}. The interval $T_j^i$ frames the right extremity of $B_j^i$ and the extremity of the positive bridge interval that belongs to $B_j^i$. The interval $\overline{T}_j^i$ frames the left extremity of $B_j^i$ and the extremity of the negative bridge interval that belongs to $B_j^i$. The interval $U_j^i$ starts just at the left of $r(T_j^i)$, the interval $\overline{U}_j^i$ starts just at the right of $l(\overline{T}_j^i)$, and they both end between the right of the last base interval of the variable $x_i$ and the left of the next base or clause interval. We moreover construct the intervals $U_j^i$ (resp. $\overline{U}_j^i$) in such a way $r(U_j^i)$ (resp. $r(\overline{U}_j^i)$) is increasing when $j$ is increasing. In other words, the $U_j^i$ (resp. $\overline{U}_j^i$) are pairwise adjacent. The interval $V_j^i$ (resp. $\overline{V_j^i}$) frames the right extremity of $U_j^i$ (resp. $\overline{U_j^i}$). And the interval $W_j^i$ (resp. $\overline{W_j^i}$) starts just at the left of $r(V_j^i)$ (resp. $r(\overline{V_j^i})$) and ends in an arbitrary point of $C_j$. Moreover, for any $i\neq i'$, $W_j^i$ (resp. $\overline{W_j^i}$) and $W_j^{i'}$ (resp. $\overline{W_j^{i'}}$) end on the same point of $C_j$. This ensures that they overlap and are therefore not adjacent.



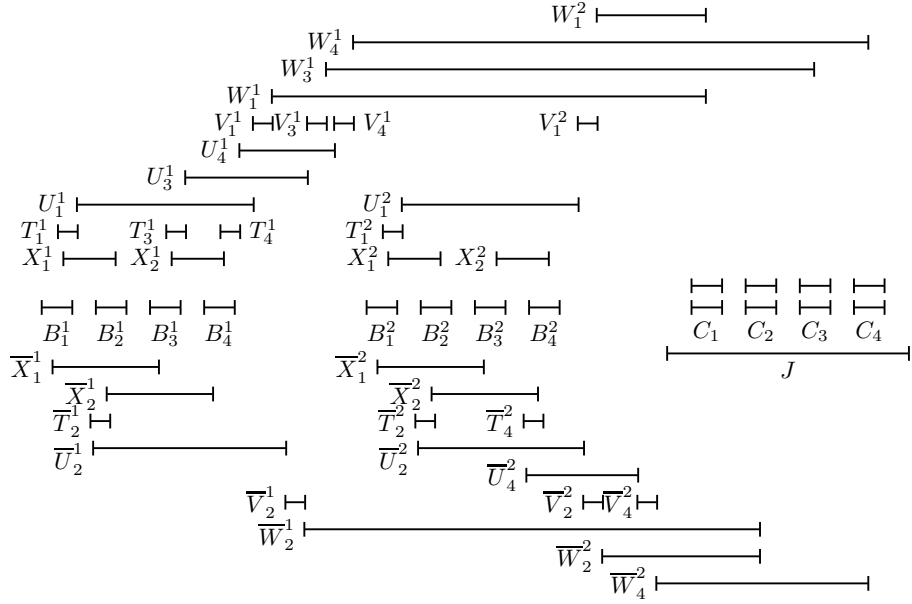
\begin{figure}[ht]
	\centering
	\scalebox{0.9}
	{
	\begin{tikzpicture}[scale=0.04]
    
    \draw (5,0) node[below]{$B_1^1$};
    \draw[|-|,thick] (-1,2) -- (11,2);
    \draw (25,0) node[below]{$B_2^1$};
    \draw[|-|,thick] (19,2) -- (31,2);
    \draw (45,0) node[below]{$B_3^1$};
    \draw[|-|,thick] (39,2) -- (51,2);
    \draw (65,0) node[below]{$B_4^1$};
    \draw[|-|,thick] (59,2) -- (71,2);
    \draw (125,0) node[below]{$B_1^2$};
    \draw[|-|,thick] (119,2) -- (131,2);
    \draw (145,0) node[below]{$B_2^2$};
    \draw[|-|,thick] (139,2) -- (151,2);
    \draw (165,0) node[below]{$B_3^2$};
    \draw[|-|,thick] (159,2) -- (171,2);
    \draw (185,0) node[below]{$B_4^2$};
    \draw[|-|,thick] (179,2) -- (191,2);
    \draw (245,0) node[below]{$C_1$};
    \draw[|-|,thick] (239,2) -- (251,2);
    \draw (265,0) node[below]{$C_2$};
    \draw[|-|,thick] (259,2) -- (271,2);
    \draw (285,0) node[below]{$C_3$};
    \draw[|-|,thick] (279,2) -- (291,2);
    \draw (305,0) node[below]{$C_4$};
    \draw[|-|,thick] (299,2) -- (311,2);
    \draw[|-|,thick] (239,10) -- (251,10);
    \draw[|-|,thick] (259,10) -- (271,10);
    \draw[|-|,thick] (279,10) -- (291,10);
    \draw[|-|,thick] (299,10) -- (311,10);
    \draw[|-|,thick] (230,-15) -- (320,-15);
    \draw (275,-15) node[below]{$J$};

    \draw (3,-20) node[left]{$\overline{X}_1^1$};
    \draw[|-|,thick] (3,-20) -- (43,-20);
    \draw (123,-20) node[left]{$\overline{X}_1^2$};
    \draw[|-|,thick] (123,-20) -- (163,-20);
    \draw (23,-30) node[left]{$\overline{X}_2^1$};
    \draw[|-|,thick] (23,-30) -- (63,-30);
    \draw (143,-30) node[left]{$\overline{X}_2^2$};
    \draw[|-|,thick] (143,-30) -- (183,-30);
    
     \draw (7,20) node[left]{$X_1^1$};
    \draw[|-|,thick] (7,20) -- (27,20);
    \draw (47,20) node[left]{$X_2^1$};
    \draw[|-|,thick] (47,20) -- (67,20);
    \draw (127,20) node[left]{$X_1^2$};
    \draw[|-|,thick] (127,20) -- (147,20);
    \draw (167,20) node[left]{$X_2^2$};
    \draw[|-|,thick] (167,20) -- (187,20);

    \draw (5,30) node[left]{$T_1^1$};
    \draw[|-|,thick] (5,30) -- (13,30);
    \draw (45,30) node[left]{$T_3^1$};
    \draw[|-|,thick] (45,30) -- (53,30);
    \draw (73,30) node[right]{$T_4^1$};
    \draw[|-|,thick] (65,30) -- (73,30);
    \draw (125,30) node[left]{$T_1^2$};
    \draw[|-|,thick] (125,30) -- (133,30);
    
    \draw (17,-40) node[left]{$\overline{T}_2^1$};
    \draw[|-|,thick] (17,-40) -- (25,-40);
    \draw (137,-40) node[left]{$\overline{T}_2^2$};
    \draw[|-|,thick] (137,-40) -- (145,-40);
    \draw (177,-40) node[left]{$\overline{T}_4^2$};
    \draw[|-|,thick] (177,-40) -- (185,-40);
    
    \draw (12,40) node[left]{$U_1^1$};
    \draw[|-|,thick] (12,40) -- (78,40);
    \draw (52,50) node[left]{$U_3^1$};
    \draw[|-|,thick] (52,50) -- (98,50);
    \draw (72,60) node[left]{$U_4^1$};
    \draw[|-|,thick] (72,60) -- (108,60);
    \draw (132,40) node[left]{$U_1^2$};
    \draw[|-|,thick] (132,40) -- (198,40);
    
    \draw (18,-54) node[left]{$\overline{U}_2^1$};
    \draw[|-|,thick] (18,-50) -- (90,-50);
    \draw (138,-54) node[left]{$\overline{U}_2^2$};
    \draw[|-|,thick] (138,-50) -- (200,-50);
    \draw (178,-60) node[left]{$\overline{U}_4^2$};
    \draw[|-|,thick] (178,-60) -- (220,-60);
    
    \draw (77,70) node[left]{$V_1^1$};
    \draw[|-|,thick] (77,70) -- (85,70);
    \draw (99,70) node[left]{$V_3^1$};
    \draw[|-|,thick] (97,70) -- (105,70);
    \draw (115,70) node[right]{$V_4^1$};
    \draw[|-|,thick] (107,70) -- (115,70);
    \draw (197,70) node[left]{$V_1^2$};
    \draw[|-|,thick] (197,70) -- (205,70);
    
    \draw (89,-70) node[left]{$\overline{V}_2^1$};
    \draw[|-|,thick] (89,-70) -- (97,-70);
    \draw (199,-70) node[left]{$\overline{V}_2^2$};
    \draw[|-|,thick] (199,-70) -- (207,-70);
    \draw (221,-70) node[left]{$\overline{V}_4^2$};
    \draw[|-|,thick] (219,-70) -- (227,-70);
    
    \draw (84,80) node[left]{$W_1^1$};
    \draw[|-|,thick] (84,80) -- (245,80);
    \draw (104,90) node[left]{$W_3^1$};
    \draw[|-|,thick] (104,90) -- (285,90);
    \draw (114,100) node[left]{$W_4^1$};
    \draw[|-|,thick] (114,100) -- (305,100);
    \draw (204,110) node[left]{$W_1^2$};
    \draw[|-|,thick] (204,110) -- (245,110);
    
    \draw (96,-82) node[left]{$\overline{W}_2^1$};
    \draw[|-|,thick] (96,-80) -- (265,-80);
    \draw (206,-90) node[left]{$\overline{W}_2^2$};
    \draw[|-|,thick] (206,-90) -- (265,-90);
    \draw (226,-100) node[left]{$\overline{W}_4^2$};
    \draw[|-|,thick] (226,-100) -- (305,-100);
	\end{tikzpicture}}
    \caption{The intervals obtained for the formula $F=(x_1\lor x_2)\land(\overline{x_1}\lor \overline{x_2})\land (x_1)\land (\overline{x_2}\lor x_1)$ with $m=4$ clauses and $n=2$ variables. The dead-end intervals and the pending intervals are not represented here.}
	\label{CG:fig:allintervals}
\end{figure}

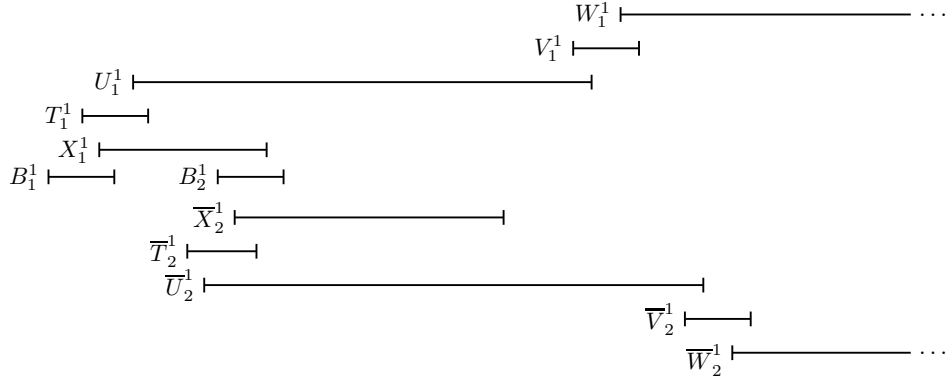
\begin{figure}[ht]
	\centering
	\scalebox{0.9}
	{
	\begin{tikzpicture}[scale=0.05]
    
    \draw (-5,2) node[left]{$B_1^1$};
    \draw[|-|,thick] (-5,2) -- (15,2);
    \draw (45,2) node[left]{$B_2^1$};
    \draw[|-|,thick] (45,2) -- (65,2);
    
    \draw (50,-10) node[left]{$\overline{X}_2^1$};
    \draw[|-|,thick] (50,-10) -- (130,-10););
    
     \draw (10,10) node[left]{$X_1^1$};
    \draw[|-|,thick] (10,10) -- (60,10);

    \draw (5,20) node[left]{$T_1^1$};
    \draw[|-|,thick] (5,20) -- (25,20);
    
    \draw (36,-20) node[left]{$\overline{T}_2^1$};
    \draw[|-|,thick] (36,-20) -- (57,-20);
    
    \draw (20,30) node[left]{$U_1^1$};
    \draw[|-|,thick] (20,30) -- (156,30);
    
    \draw (41,-30) node[left]{$\overline{U}_2^1$};
    \draw[|-|,thick] (41,-30) -- (189,-30);
    
    \draw (150,40) node[left]{$V_1^1$};
    \draw[|-|,thick] (150,40) -- (170,40);
    
    \draw (183,-40) node[left]{$\overline{V}_2^1$};
    \draw[|-|,thick] (183,-40) -- (203,-40);
    
    \draw (164,50) node[left]{$W_1^1$};
    \draw[|-,thick] (164,50) -- (250,50);
    \draw (250,50) node[right]{$\ldots$};
    
    \draw (197,-52) node[left]{$\overline{W}_2^1$};
    \draw[|-,thick] (197,-50) -- (250,-50);
    \draw (250,-50) node[right]{$\ldots$};

	\end{tikzpicture}}
    \caption{A zoom on some intervals of the variable $x_1$.}
	\label{CG:fig:allintervalszoom}
\end{figure}


A \emph{path interval} is a positive or a negative path interval. The \emph{intervals of $x_i$} are the base, bridge and path intervals of $x_i$. The $T$ intervals of $x_i$ refers to the intervals $T_j^i$ for any $j$. The $\overline{T}$, $U$, $\overline{U}$, $V$, $\overline{V}$, $W$ and $\overline{W}$ intervals of $x_i$ are defined similarly.

Let us outline some neighbors of the path intervals. The neighborhood of every clause interval $C_j$ is the set of intervals $W_j^i$ with $x_i\in c_j$ and intervals $\overline{W}^i_j$ with $\overline{x_i}\in c_j$. 
Since $V_j^i$ spans the left extremity of $W_j^i$ and the right extremity of $U_j^i$ and since no interval starts or ends between these two points, the interval $V_j^i$ is only adjacent to $U_j^i$ and $W_j^i$. Similarly  $\overline{V}_j^i$ is only adjacent to $\overline{U}_j^i$ and $\overline{W}_j^i$. Moreover, $T_j^i$ is only adjacent to $B_j^i$, $U_j^i$ and one positive bridge interval (the same one that is adjacent to $B_j^i$), and $\overline{T}_j^i$ is only adjacent to $B_j^i$, $\overline{U}_j^i$ and one negative bridge interval (the same one that is adjacent to $B_j^i$).
Moreover, since $U_j^i$ and $W_j^i$ are not adjacent, $B_j^i$, $T_j^i$, $U_j^i$, $V_j^i$, $W_j^i$ and $C_j$ induce a path, and since $\overline{U}_j^i$ and $\overline{W}_j^i$ are not adjacent, $B_j^i$, $\overline{T}_j^i$, $\overline{U}_j^i$, $\overline{V}_j^i$, $\overline{W}_j^i$ and $C_j$ induce a path. Finally, for any two variables $x_i$ and $x_i'$ such that $x_i\neq x_i'$, the only path intervals of respectively $x_i$ and $x_i'$ that can be adjacent are the $W$ and $\overline{W}$ intervals adjacent to different clause intervals.

Now, for every bridge interval and every $U$, $\overline{U}$, $W$ and $\overline{W}$ interval, we create a \emph{dead-end interval}, that is only adjacent to it. Remark \ref{rk:AdjInterval} ensures that it can be done while keeping a circle graph. Then, for any dead-end interval, we create $6mn$ \emph{pending intervals} that are each only adjacent to it. Again, Remark \ref{rk:AdjInterval} ensures that the resulting graph is a circle graph. Informally speaking, since the dead-end intervals have a lot of pending intervals, they will be forced to be in any dominating set of size at most $6mn$. Thus, in any dominating set, we will know that bridge, $U$, $\overline{U}$, $W$ and $\overline{W}$ intervals (as well as dead-end an pending vertices) are already dominated. So the other vertices in the dominating set will only be there to dominate the other vertices of the graph, which are called the \emph{important vertices}.

Finally, we create a \emph{junction interval} $J$, that frames $\ell(C_1)$ and $r(C_m)$. By construction, it is adjacent to every $W$ or $\overline{W}$ interval, and to no other interval.  This completes the construction of the graph $G_F$.

\subsection{Basic properties of $G_F$} 

Let us first give a couple of properties satisfied by $G_F$. The following lemma will be used to guarantee that any token can be moved to any vertex of the graph as long as the rest of the tokens form a dominating set.

\begin{lemma}\label{CG:lem:connected}
The graph $G_F$ is connected.
\end{lemma}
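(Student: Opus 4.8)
The plan is to exhibit a single connected ``backbone'' that reaches every vertex, assembled from three connected pieces that are then glued together. First I would treat the clause side: let $\mathcal{C}$ consist of the junction interval $J$, all $W$ and $\overline{W}$ intervals, and all clause intervals $C_j, C_j'$. By construction $J$ is adjacent to every $W$ and $\overline{W}$ interval, so all of these lie in one component together with $J$; and since $F$ is satisfiable every clause is non-empty, so each $C_j$ (and its twin $C_j'$, which meets exactly the same $W/\overline{W}$ intervals) is adjacent to at least one $W$ or $\overline{W}$ interval and hence joins this component. Thus $\mathcal{C}$ is connected.

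Next, for each variable $x_i$ I would show that the set $\mathcal{V}_i$ of its base and bridge intervals is connected, using the adjacencies already recorded in the construction: every base interval is adjacent to exactly one positive and one negative bridge interval, and every bridge interval is incident to exactly two base intervals. Reading off the explicit endpoint placements, the positive bridges link the pairs $\{B_1^i,B_2^i\}$, $\{B_{m-1}^i,B_m^i\}$, $\{B_{4r-1}^i,B_{4r+1}^i\}$ and $\{B_{4r}^i,B_{4r+2}^i\}$, while the negative bridges link $\{B_{4r+1}^i,B_{4r+3}^i\}$ and $\{B_{4r+2}^i,B_{4r+4}^i\}$. Tracing these links shows that the odd-indexed base intervals form one path $B_1^i\!-\!B_3^i\!-\!\cdots\!-\!B_{m-1}^i$ and the even-indexed ones another $B_2^i\!-\!B_4^i\!-\!\cdots\!-\!B_m^i$, which are tied together by $X_1^i$ and $X_{m/2}^i$ (in fact the base intervals, with the bridges realizing these links, form a single cycle). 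Hence $\mathcal{V}_i$ is connected.

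I would then glue each $\mathcal{V}_i$ to $\mathcal{C}$. Assuming, as we may, that every variable occurs in at least one clause, pick a clause $c_j$ containing $x_i$ or $\overline{x_i}$. The construction guarantees that $B_j^i, T_j^i, U_j^i, V_j^i, W_j^i, C_j$ (respectively the barred analogues) induce a path, so this path joins $\mathcal{V}_i$ to $W_j^i \in \mathcal{C}$. Consequently $\mathcal{C} \cup \bigcup_i \mathcal{V}_i$, together with all path intervals, lies in one component. Finally, every dead-end interval is adjacent to its unique parent --- a bridge, $U$, $\overline{U}$, $W$, or $\overline{W}$ interval, all of which already lie in this component --- and every pending interval is adjacent to its dead-end, so all remaining vertices attach to the component. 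Therefore $G_F$ is connected.

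The main obstacle is the middle step: verifying that the positive and negative bridge intervals of a single variable genuinely weave its $m$ base intervals into one connected piece rather than several. This is purely a matter of checking the crossing pattern of the explicitly placed bridges, and the only delicate point is the boundary behavior at $X_1^i$ and $X_{m/2}^i$, which are precisely the two bridges that stitch the odd- and even-indexed chains together.
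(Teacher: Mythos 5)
Your proof is correct and follows essentially the same route as the paper: establish connectivity of each variable gadget, glue the gadgets to the clause side through the $B_j^i$--$T_j^i$--$U_j^i$--$V_j^i$--$W_j^i$--$C_j$ paths and the junction interval $J$, and attach the dead-end and pending intervals last. The only (harmless) difference is in the middle step: the paper connects the bridge intervals of $x_i$ directly to one another via the adjacencies of $\overline{X}_j^i$ with $X_{j-1}^i$ and $X_j^i$, whereas you route through the base intervals; both verifications are consistent with the stated endpoint placements.
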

\begin{proof}
Let $x_i$ be a variable. Let us first prove that the intervals of $x_i$ are in the same connected component of $G_F$. (Recall that they are the base, bridge and path intervals of $x_i$). Firstly, for any $j$ such that $x_i\in C_j$ (resp. $\overline{x_i} \in C_j$), $B_j^i T_j^i U_j^i V_j^i W_j^i$ (resp.  $B_j^i \overline{T}_j^i \overline{U}_j^i \overline{V}_j^i \overline{W}_j^i$) is a path of $G_F$. Since every base interval of $x_i$ is adjacent to a positive and a negative bridge interval of $x_i$, it is enough to show that all the bridge intervals of $x_i$ are in the same connected component. Since for every $j \ge 2$, $\overline{X}_j^i$ is adjacent to $X_{j-1}^i$ and $X_{j}^i$, we know that $X_1^i \overline{X}_2^i,X_2^i \ldots \overline{X}_{\frac{m}{2}}^i X_{\frac{m}{2}}^i$ is a path of $G_F$. Moreover, $\overline{X}_1^i$ is adjacent to $\overline{X}_2^i$. So all the intervals of $x_i$ are in the same connected component of $G_F$..

Now, since the junction interval $J$ is adjacent to every $W$ and $\overline{W}$ interval (and that each variable appears in at least one clause), $J$ is in the connected component of all the path variables, so the intervals of $x_i$ and $x_{i'}$ are in the same connected component for every $i \ne i'$. Since each clause contains at least one variable, $C_j$ is adjacent to at least one interval $W_j^i$ or $\overline{W}_j^i$. Finally, each dead-end interval is adjacent to a bridge interval or a $U$, $\overline{U}$, $W$ or $\overline{W}$ interval, and each pendant interval is adjacent to a dead-end interval. Therefore, $G_F$ is connected.
\end{proof}

For any variable assignment $A$ of $F$, let $D_F(A)$ be the set of intervals of $G_F$ defined as follows. The junction interval $J$ belongs to $D_F(A)$ and all the dead-end intervals belong to $D_F(A)$. For any variable $x_i$ such that $x_i=1$ in $A$, the positive bridge, $W$ and $\overline{U}$ intervals of $x_i$ belong to $D_F(A)$. Finally, for any variable $x_i$ such that $x_i=0$ in $A$, the negative bridge, $\overline{W}$ and $U$ intervals of $x_i$ belong to $D_F(A)$. The multiplicity of each of these intervals in $D_F(A)$ is one. Thus, we have $|D_F(A)|=\frac{3mn}{2} + 3\sum_{i=1}^n \ell_i + 1$ where for any variable $x_i$, $\ell_i$ is the number of clauses that contain $x_i$ or $\overline{x_i}$.

\begin{lemma}\label{CG:lem:DS}
If $A$ satisfies $F$, then $D_F(A)\setminus J$ is a dominating set of $G_F$.
\end{lemma}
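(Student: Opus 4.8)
The plan is to verify that every vertex of $G_F$ is either in $D_F(A)\setminus J$ or has a neighbor there, going through the vertex types one by one. The crucial observation is that a large part of the domination is independent of $A$: since all dead-end intervals lie in $D_F(A)$ (and hence in $D_F(A)\setminus J$), and since every bridge, $U$, $\overline{U}$, $W$ and $\overline{W}$ interval has its own dead-end neighbor, all of these intervals are dominated at once. The pending intervals, each adjacent only to a dead-end interval, are likewise dominated, and the dead-end intervals dominate themselves. So the only vertices still to be checked are the base intervals, the $T$, $\overline{T}$, $V$ and $\overline{V}$ intervals, the clause intervals $C_j,C_j'$, and the junction interval $J$ itself.

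For the base intervals I would use the bridge intervals selected by $A$: each $B_j^i$ is adjacent to exactly one positive and one negative bridge interval of $x_i$, and whichever of the two families ($X$ or $\overline{X}$) is placed in $D_F(A)$ according to the value of $x_i$ dominates $B_j^i$. For the $T,\overline{T},V,\overline{V}$ intervals I would do a short case analysis on the truth value of $x_i$ using their known small neighborhoods. For instance $T_j^i$ (which exists only when $x_i\in c_j$) is adjacent to its positive bridge interval and to $U_j^i$; if $x_i=1$ the positive bridge is in $D_F(A)$, and if $x_i=0$ then $U_j^i$ is in $D_F(A)$, so $T_j^i$ is dominated either way. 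The arguments for $\overline{T}_j^i$ (via the negative bridge or $\overline{U}_j^i$) and for $V_j^i,\overline{V}_j^i$ (each adjacent only to its $U$/$\overline{U}$ and $W$/$\overline{W}$ interval, exactly one of which is always selected) are completely symmetric.

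The part where the hypothesis that $A$ satisfies $F$ is actually used, and the main point of the lemma, concerns the clause intervals and $J$. The neighborhood of $C_j$ is exactly $\{W_j^i : x_i\in c_j\}\cup\{\overline{W}_j^i : \overline{x_i}\in c_j\}$. Since $A$ satisfies $c_j$, some literal of $c_j$ is true: if a positive literal $x_i\in c_j$ is true then $x_i=1$ and $W_j^i\in D_F(A)$, while if a negative literal $\overline{x_i}\in c_j$ is true then $x_i=0$ and $\overline{W}_j^i\in D_F(A)$; in either case a neighbor of $C_j$ lies in $D_F(A)\setminus J$, and the same interval dominates the identical copy $C_j'$. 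Finally, $J$ is the delicate vertex precisely because it has been removed from the set: $J$ is adjacent to every $W$ and $\overline{W}$ interval and to nothing else, so I would dominate it by observing that at least one clause (say $c_1$) is satisfied by $A$, which as above forces some $W$ or $\overline{W}$ interval into $D_F(A)\setminus J$, and that interval is adjacent to $J$. Hence $J$ is dominated without itself being in the set, and $D_F(A)\setminus J$ is a dominating set of $G_F$. The only genuinely non-routine step is this last one, so I would emphasize that it is exactly the domination of $J$ that encodes the satisfiability of $F$.
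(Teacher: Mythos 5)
Your proposal is correct and follows essentially the same route as the paper's proof: dead-end intervals handle the pending, dead-end, bridge, $U$, $\overline{U}$, $W$ and $\overline{W}$ intervals; the selected bridge family handles the base intervals; the case analysis on the value of $x_i$ handles the $T$, $\overline{T}$, $V$, $\overline{V}$ intervals; and satisfiability of each clause supplies a $W$ or $\overline{W}$ neighbor for $C_j$, $C_j'$ and $J$. Your version is merely a bit more explicit about which of the two possible dominators is selected in each truth-value case.
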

\begin{proof}
Since every dead-end interval belongs to $D_F(A)\setminus J$, every pending and dead-end interval is dominated, as well as every bridge, $U$, $\overline{U}$, $W$ and $\overline{W}$ interval. 
Since for each variable $x_i$, the positive (resp. negative) bridge intervals of $x_i$ dominate the base intervals of $x_i$, the base intervals are dominated. Moreover, the positive (resp. negative) bridge intervals of $x_i$ and the $U$ (resp. $\overline{U}$) intervals of $x_i$ both dominate the $T$ (resp. $\overline{T}$) intervals of $x_i$. Thus, the $T$ and $\overline{T}$ intervals are all dominated. Moreover, for any variable $x_i$, the $U$ and $W$ (resp. $\overline{U}$ and $\overline{W}$) intervals of $x_i$ both dominate the $V$ (resp. $\overline{V}$) intervals of $x_i$. Thus, the $V$ and $\overline{V}$ intervals are all dominated. Finally, since $A$ satisfies $F$, each clause has at least one of its literal in $A$. Thus, each $C_j$ and $C_j'$ has at least one adjacent interval $W_j^i$ or $\overline{W}_j^i$ in $D_F(A)\setminus J$ and are therefore dominated by it, as well as the junction interval.
   \end{proof}
   
Before continuing further, let us prove a few results that are of importance in our proof. Let $K:= \frac{3mn}{2} + 3\sum_{i=1}^n \ell_i + 1$. Since the number $6mn$ of leaves attached on each dead-end interval is strictly more than $K$ (as $\ell_i\leq m$), the following holds.

\begin{remark}\label{CG:rk:dead-end}
Any dominating set of size at most $K$ contains all the $(mn+2\sum_{i=1}^n \ell_i)$ dead-end intervals.
\end{remark}

Any dominating set of size $K$ contains all the dead-end vertices. And then all the pending, dead-end, bridge, $U$, $\overline{U}$, $W$ and $\overline{W}$ intervals are dominated. So we will simply have to focus on the domination of base, $T$, $\overline{T}$, $V$, $\overline{V}$ and junction intervals (i.e. the so-called important intervals).

\begin{lemma}\label{CG:lem:atleast}
If $D$ is a dominating set of $G$, then for any variable $x_i$, $D$ contains at least $\ell_i$ intervals that dominate the $V$ and $\overline{V}$ intervals of $x_i$, and at least $\frac{m}{2}$ intervals that dominate the base intervals of $x_i$. Moreover, these two sets of intervals are disjoint, and they are intervals of $x_i$.
\end{lemma}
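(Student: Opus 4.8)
The plan is to establish the two lower bounds separately, using the local adjacency structure of the path and bridge intervals that was pinned down during the construction, and then to observe that the two families of dominators must be disjoint simply because they involve different types of intervals.

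For the $V$ and $\overline{V}$ intervals, I would first record that, as outlined in the construction, $V_j^i$ is adjacent only to $U_j^i$ and $W_j^i$, so its closed neighborhood is $\{V_j^i, U_j^i, W_j^i\}$; symmetrically $\overline{V}_j^i$ has closed neighborhood $\{\overline{V}_j^i, \overline{U}_j^i, \overline{W}_j^i\}$. Since each clause $c_j$ containing $x_i$ or $\overline{x_i}$ contributes exactly one such interval (recall $x_i$ and $\overline{x_i}$ are never both in $c_j$), there are exactly $\ell_i$ of them. The key point is that these closed neighborhoods are pairwise disjoint: for distinct clause indices the intervals $U,V,W$ (resp. $\overline{U},\overline{V},\overline{W}$) carry distinct indices, and a positive triple never meets a negative triple. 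Because $D$ must contain a vertex of each of these $\ell_i$ pairwise disjoint sets, it contains at least $\ell_i$ intervals dominating the $V$ and $\overline{V}$ intervals, and all of them are $U$, $\overline{U}$, $V$, $\overline{V}$, $W$ or $\overline{W}$ intervals of $x_i$.

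For the base intervals I would argue by a capacity count. There are $m$ base intervals $B_1^i,\ldots,B_m^i$, and they are pairwise non-adjacent, so each must be dominated either by itself or by one of its neighbours. By construction every neighbour of a base interval of $x_i$ is again an interval of $x_i$ (a bridge interval, or a $T$ or $\overline{T}$ interval), so every interval of $D$ dominating some base interval is an interval of $x_i$. It then suffices to bound how many base intervals a single interval can dominate: a base interval dominates only itself; a $T_j^i$ (resp. $\overline{T}_j^i$) interval is adjacent to the single base interval $B_j^i$; and each positive (resp. negative) bridge interval is incident to exactly two base intervals. Hence every interval dominates at most two base intervals, so if $D_B\subseteq D$ denotes the set of intervals dominating some base interval of $x_i$, a double count gives $2|D_B|\ge m$, i.e. $|D_B|\ge\frac{m}{2}$.

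Finally, the disjointness is immediate from the two descriptions: the intervals dominating the base intervals are of type $B$, $X$, $\overline{X}$, $T$ or $\overline{T}$, whereas the intervals dominating the $V$ and $\overline{V}$ intervals are of type $U$, $\overline{U}$, $V$, $\overline{V}$, $W$ or $\overline{W}$, and these two lists of types are disjoint; both families consist only of intervals of $x_i$, as noted above. I expect the only delicate point to be the capacity bounds, namely verifying from the relative positions of the extremities that no $T$, $\overline{T}$ or bridge interval is adjacent to more base intervals than claimed, and that the $V$/$\overline{V}$ closed neighborhoods are exactly the stated triples; but all of these adjacencies were already fixed when the intervals were placed, so the remaining argument is essentially bookkeeping over the construction.
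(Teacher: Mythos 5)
Your proof is correct and follows essentially the same route as the paper's: the lower bound of $\ell_i$ comes from the fact that each $V_j^i$ (resp. $\overline{V}_j^i$) is dominated only within the triple $\{U_j^i,V_j^i,W_j^i\}$ (resp. $\{\overline{U}_j^i,\overline{V}_j^i,\overline{W}_j^i\}$), and the bound of $\frac{m}{2}$ comes from the capacity count that only base, bridge, $T$ and $\overline{T}$ intervals of $x_i$ meet the $m$ base intervals and each such interval covers at most two of them. Your version merely makes explicit the pairwise disjointness of the closed neighborhoods and the type-based disjointness of the two families, which the paper leaves implicit.
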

\begin{proof}
For any variable $x_i$, each interval $V_j^i$ (resp. $\overline{V}_j^i$) can only be dominated by $U_j^i$, $V_j^i$ or $W_j^i$ (resp. $\overline{U}_j^i$, $\overline{V}_j^i$ or $\overline{W}_j^i$). Indeed $V_j^i$ spans the left extremity of $W_j^i$ and the right extremity of $U_j^i$ and since no interval starts or ends between these two points, the interval $V_j^i$ is only adjacent to $U_j^i$ and $W_j^i$. And similarly  $\overline{V}_j^i$ is only adjacent to $\overline{U}_j^i$ and $\overline{W}_j^i$. Thus, at least $\ell_i$ intervals dominate the $V$ and $\overline{V}$ intervals of $x_i$, and they are intervals of $x_i$.
Moreover, only the base, bridge, $T$ and $\overline{T}$ intervals of $x_i$ are adjacent to the base intervals. Since each bridge interval is adjacent to two base intervals, and each $T$ and $\overline{T}$ interval of $x_i$ is adjacent to one base interval of $x_i$, $D$ must contain at least $\frac{m}{2}$ of such intervals to dominate the $m$ base intervals.
\end{proof}

Remark \ref{CG:rk:dead-end} and Lemma \ref{CG:lem:atleast} imply that any dominating set $D$ of size $K$ contains $(mn+2\sum_{i=1}^n \ell_i)$ dead-end intervals, as well as $(\ell_i+\frac{m}{2})$ intervals of $x_i$ for any variable $x_i$. Since $K=\frac{3mn}{2} + 3\sum_{i=1}^n \ell_i + 1$, this leaves only one remaining token in $D$. Thus, for any variable $x_i$ but at most one, there are exactly $(\ell_i+\frac{m}{2})$ intervals of $x_i$ in $D$. 
If there exists a variable $x_k$ such that there are more  $(\ell_i+\frac{m}{2})$ intervals of $x_k$ in $D$, then there are exactly $(\ell_k+\frac{m}{2}+1)$ intervals of $x_k$ in $D$, and we call this variable the \emph{moving variable} of $D$, denoted by $mv(D)$.

For any variable $x_i$, we denote by $X_i$ the set of positive bridge variables of $x_i$ and by $\overline{X_i}$ the set of negative bridge variables of $x_i$. Similarly, we denote by $W_i$ the set of $W$ variables of $x_i$ and by $\overline{W_i}$ the set of $\overline{W}$ variables of $x_i$. Let us now give some precision about the intervals of $x_i$ that belong to $D$.

\begin{lemma}\label{CG:lem:bridge}
If $D$ is a dominating set of size $K$, then for any variable $x_i\neq mv(D)$, either $X_i\subseteq D$ and $\overline{X_i} \cap D = \emptyset$, or $\overline{X_i}\subseteq D$ and $X_i \cap D = \emptyset$.
\end{lemma}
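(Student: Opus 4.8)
The plan is to pin down exactly which intervals of $x_i$ lie in $D$ and dominate the base intervals, and then to recognise the resulting constraint as a perfect-matching problem whose only two solutions are ``all positive'' and ``all negative''.

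First I would count. As observed just before the statement, Remark~\ref{CG:rk:dead-end} and Lemma~\ref{CG:lem:atleast} force $D$ to contain exactly $\ell_i+\frac m2$ intervals of $x_i$ whenever $x_i\ne mv(D)$, of which at least $\ell_i$ dominate the $V$ and $\overline V$ intervals and at least $\frac m2$ dominate the base intervals, these two families being disjoint. Summing the two lower bounds already gives $\ell_i+\frac m2$, so both are tight: \emph{exactly} $\frac m2$ intervals of $x_i$ in $D$ dominate the $m$ base intervals of $x_i$. The only intervals adjacent to a base interval $B_j^i$ are base, bridge, $T$ and $\overline T$ intervals of $x_i$; among these a base interval dominates only itself and a $T$ or $\overline T$ interval dominates a single base interval, whereas a bridge interval dominates exactly two. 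Since these $\frac m2$ intervals must dominate all $m$ base intervals and each dominates at most two, every one of them must be a bridge interval dominating two base intervals, and the pairs they dominate must be pairwise disjoint (otherwise strictly fewer than $m$ base intervals would be covered); in particular no bridge interval occurs twice, even as a multiset element. Thus the bridge intervals of $x_i$ belonging to $D$ form an exact cover of the base intervals of $x_i$ by pairs.

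Next I would encode this as a graph problem. Let $H_i$ be the graph whose vertices are the $m$ base intervals of $x_i$ and whose edges are the pairs of base intervals dominated by a common bridge interval, each edge labelled \emph{positive} or \emph{negative} according to the type of its bridge interval. By the construction, every base interval is dominated by exactly one positive and exactly one negative bridge interval, so every vertex of $H_i$ has exactly one positive and one negative incident edge; hence $H_i$ is $2$-regular, i.e. a disjoint union of cycles, and along each cycle positive and negative edges alternate, so every cycle has even length. The selection of bridge intervals isolated above is precisely a perfect matching of $H_i$.

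The crux is to show that $H_i$ is a \emph{single} cycle: if it split into several cycles one could choose positive edges on one cycle and negative edges on another, and the conclusion would fail. Using the explicit positions of the bridge intervals, the positive edges are $\{1,2\}$, the pairs $\{4r-1,4r+1\}$ and $\{4r,4r+2\}$ for $1\le r<q$, and $\{m-1,m\}$, while the negative edges are $\{4r+1,4r+3\}$ and $\{4r+2,4r+4\}$ for $0\le r<q$ (writing $B_j^i$ simply as $j$ and $m=4q$). Starting at vertex $1$ and following edges, one leaves $1$ along the positive edge to $2$ and then alternately takes negative and positive edges, each time increasing the index by $2$; this traverses all even vertices up to $m$, then crosses over from $m$ to $m-1$ along the positive edge and symmetrically decreases by $2$ through all odd vertices down to $3$, whose negative edge $\{1,3\}$ closes the walk at $1$. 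This walk visits $1$, all even vertices, and all odd vertices exactly once, i.e. it is a Hamiltonian cycle, so $H_i$ is a single even cycle. An even cycle with alternating labels has exactly two perfect matchings, namely the set of all its positive edges and the set of all its negative edges. Translating back, the bridge intervals of $x_i$ in $D$ are either exactly all positive bridge intervals, giving $X_i\subseteq D$ and $\overline{X_i}\cap D=\emptyset$, or exactly all negative bridge intervals, giving $\overline{X_i}\subseteq D$ and $X_i\cap D=\emptyset$. The index-chasing needed to confirm that this walk is genuinely a single Hamiltonian cycle (including the boundary pairs at $1,2$ and $m-1,m$ and the degenerate case $q=1$) is the only delicate point; everything else is bookkeeping.
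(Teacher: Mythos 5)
Your proof is correct, and the first half (the counting) is identical to the paper's: both use Remark~\ref{CG:rk:dead-end} and Lemma~\ref{CG:lem:atleast} to conclude that exactly $\frac m2$ intervals of $x_i$ in $D$ dominate the base intervals, and that since each such interval dominates at most two base intervals while $m$ must be covered, they are all bridge intervals covering pairwise disjoint pairs. Where you diverge is the final step. The paper runs a direct forced-propagation induction: if $X_1^i\in D$ then it covers $B_1^i,B_2^i$, which excludes $\overline{X}_1^i,\overline{X}_2^i$, which in turn forces $X_2^i,X_3^i\in D$, and so on down the line (and symmetrically starting from $X_1^i\notin D$). You instead abstract the situation into the auxiliary graph $H_i$ on the base intervals, observe that it is $2$-regular with positive and negative edges alternating, verify by index-chasing that it is a single Hamiltonian even cycle, and invoke the fact that such a cycle has exactly two perfect matchings (all-positive, all-negative). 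The two arguments carry the same combinatorial content --- the paper's induction is precisely a traversal of your cycle --- but your framing makes explicit the one point the paper leaves implicit, namely that the incidence structure is \emph{connected} (a single cycle rather than several), which is exactly where the argument would break if the bridge intervals were laid out differently. That is a genuine clarification; the price is that you must do the index-chasing carefully, and your check of the boundary edges $\{1,2\}$, $\{m-1,m\}$ and the case $q=1$ is consistent with the construction as given.
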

\begin{proof}
Since $x_i\neq mv(D)$, there are exactly $\ell_i+\frac{m}{2}$ variables of $x_i$ in $D$. Thus, by Lemma \ref{CG:lem:atleast}, exactly $\frac{m}{2}$ intervals of $x_i$ in $D$ dominate the bridge intervals of $x_i$. Only the bridge, $T$ and $\overline{T}$ intervals of $x_i$ are adjacent to the base intervals. Moreover, bridge intervals are adjacent to two base intervals and $T$ or $\overline{T}$ intervals are adjacent to only one. Since there are $m$ base intervals of $x_i$, each interval of $D$ must dominate a pair of base intervals (or none of them). So these intervals of $D$ should be some bridge intervals of $x_i$. 

Note that, by cardinality, each pair of bridge intervals of $D$ must dominate pairwise disjoint base intervals.
Let us now show by induction that these bridge intervals are either all the positive bridge intervals, or all the negative bridge intervals. We study two cases: either $X_1^i\in D$, or $X_1^i\not \in D$.

Assume that $X_1^i\in D$. In $D$, $X_1^i$ dominates $B_1^i$ and $B_2^i$. Thus, since $\overline{X}_1^i$ dominates $B_1^i$ and $\overline{X}_2^i$ dominates $B_2^i$, none of  $\overline{X}_1^i,\overline{X}_2^i$ are in $D$ (since their neighborhood in the set of base intervals is not disjoint with $X_1^i$). But $B_3^i$ (resp  $B_4^i$) is only adjacent to $\overline{X}_1^i$ and $X_2^i$ (resp. $\overline{X}_2^i$ and $X_3^i$). Thus both $X_2^i,X_3^i$ are in $D$. Suppose now that for a given $j$ such that $j$ is even and $j\leq \frac{m}{2}-2$, we have $X_j^i,X_{j+1}^i\in D$. Then, since a base interval dominated by $X_j^i$ (resp. $X_{j+1}^i$) also is dominated by $\overline{X}_{j+1}^i$ (resp. $\overline{X}_{j+2}^i$), the intervals $\overline{X}_{j+1}^i, \overline{X}_{j+2}^i$ are not in $D$. But there is a base interval adjacent only to $\overline{X}_{j+1}^i$ and $X_{j+2}^i$ (resp. $\overline{X}_{j+2}^i$ and $X_{j+3}^i$ if $j\neq \frac{m}{2}-2$, or $\overline{X}_{j+2}^i$ and $X_{j+2}^i$ if $j=\frac{m}{2}-2$). Therefore, if $j+2<\frac{m}{2}$ we have $X_{j+2}^i,X_{j+3}^i\in D$, and $X_{\frac{m}{2}}^i\in D$. By induction, if $X_1^i\in D$ then each of the $\frac{m}{2}$ positive bridge intervals belong to $D$ and thus none of the negative bridge intervals do. 

Assume now that $X_1^i\not \in D$. Then, to dominate $B_1^i$ and $B_2^i$, we must have $\overline{X}_1^i, \overline{X}_2^i\in D$. Let us show that if for a given odd $j$ such that $j\leq\frac{m}{2}-3$ we have $\overline{X}_j^i,\overline{X}_{j+1}^i\in D$, then $\overline{X}_{j+2}^i,\overline{X}_{j+3}^i\in D$. Since $\overline{X}_j^i$ (resp. $\overline{X}_{j+1}^i$) dominates base intervals also dominated by $X_{j+1}^i$ (resp. $X_{j+2}^i$), we have $X_{j+1}^i, X_{j+2}^i\not \in D$. But there exists a base interval only adjacent to $X_{j+1}^i$ and $\overline{X}_{j+2}^i$ (resp. $X_{j+2}^i$ and $\overline{X}_{j+3}^i$). Thus, $\overline{X}_{j+2}^i,\overline{X}_{j+3}^i\in D$. By induction, if $X_1^i\not \in D$ then each of the $\frac{m}{2}$ negative bridge intervals belong to $D$. Thus, none of the positive bridge intervals belong to $D$.
\end{proof}

\begin{lemma}\label{CG:lem:W}
If $D$ is a dominating set of size $K$, then for any variable $x_i\neq mv(D)$, if $X_i\subseteq D$ then $\overline{W_i}\cap D= \emptyset$, otherwise $W_i \cap  D= \emptyset$.
\end{lemma}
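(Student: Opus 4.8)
The plan is to exploit the tight cardinality budget together with the very restricted neighbourhoods of the $\overline{T}$ (resp. $T$) intervals. Fix a variable $x_i \neq mv(D)$, so that $D$ contains exactly $\ell_i + \frac{m}{2}$ intervals of $x_i$. By Lemma~\ref{CG:lem:atleast} these split into two disjoint groups: a set $S_2$ of exactly $\frac{m}{2}$ intervals dominating the base intervals, and a set $S_1$ of exactly $\ell_i$ intervals dominating the $V$ and $\overline{V}$ intervals; in particular every interval of $x_i$ in $D$ lies in $S_1 \cup S_2$. By Lemma~\ref{CG:lem:bridge}, $S_2$ consists entirely of bridge intervals, and (up to symmetry) we may assume $X_i \subseteq D$ and $\overline{X_i} \cap D = \emptyset$. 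A first useful consequence is that no base interval of $x_i$ lies in $D$: a base interval dominates no $V/\overline{V}$ interval (so it is not in $S_1$) and, being pairwise non-adjacent to the other base intervals, is not a base-dominator either (so it is not in $S_2$).

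Next I would argue that the $\overline{T}$ intervals force the corresponding $\overline{U}$ intervals into $D$. For every clause $c_j$ with $\overline{x_i} \in c_j$, the interval $\overline{T}_j^i$ must be dominated, and its only neighbours are $B_j^i$, the negative bridge interval attached to $B_j^i$, and $\overline{U}_j^i$. The first is excluded by the previous paragraph, the second by $\overline{X_i}\cap D=\emptyset$, and $\overline{T}_j^i$ itself is not in $D$ since it dominates no $V/\overline{V}$ interval and is not a bridge, hence lies in neither $S_1$ nor $S_2$. Thus $\overline{U}_j^i \in D$, and as $\overline{U}_j^i$ dominates $\overline{V}_j^i$ it belongs to $S_1$. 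Writing $\ell_i^-$ for the number of clauses containing $\overline{x_i}$ and $\ell_i^+ = \ell_i - \ell_i^-$ for the number containing $x_i$, these forced intervals account for exactly $\ell_i^-$ members of $S_1$.

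The final step is a matching/counting argument. Since each interval able to dominate a $V$ or $\overline{V}$ interval of $x_i$ dominates exactly one of them, and $|S_1| = \ell_i$ equals the total number of $V/\overline{V}$ intervals, the elements of $S_1$ are in bijection with the $V/\overline{V}$ intervals, each dominating a distinct one. The $\ell_i^-$ forced intervals $\overline{U}_j^i$ already match the $\ell_i^-$ intervals $\overline{V}_j^i$, so the remaining $\ell_i^+$ elements of $S_1$ must match the $\ell_i^+$ positive intervals $V_j^i$ and therefore lie in $\{U_j^i,V_j^i,W_j^i\}$. No $\overline{W}_j^i$ can be in $S_1$: it would need to be matched, but the only $V/\overline{V}$ interval it dominates is $\overline{V}_j^i$, which is already matched to $\overline{U}_j^i$. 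As $S_2$ contains only bridges, no $\overline{W}$ interval lies in $D$ at all, i.e. $\overline{W_i}\cap D = \emptyset$. Exchanging the roles of positive and negative intervals (using the $T_j^i$ intervals to force the $U_j^i$ into $D$) yields the symmetric statement when $\overline{X_i}\subseteq D$, namely $W_i \cap D = \emptyset$.

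The main obstacle is the interplay in the last paragraph between \emph{forcing} and \emph{counting}: one must verify that the forced intervals $\overline{U}_j^i$ genuinely occupy slots of $S_1$ (which uses that they dominate $\overline{V}_j^i$ together with the exact equality $|S_1|=\ell_i$), and that the resulting bijection leaves no free slot for a $\overline{W}$ interval. Everything else reduces to reading off the explicit neighbourhoods fixed during the construction.
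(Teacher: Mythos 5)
Your proof is correct and follows essentially the same route as the paper's: you use the exact cardinality from Lemma~\ref{CG:lem:atleast} to force each $\overline{T}_j^i$ to be dominated by $\overline{U}_j^i$ (the only neighbour that also dominates a $V$ or $\overline{V}$ interval once the negative bridges are excluded), and then conclude that the $\overline{V}_j^i$ being already accounted for leaves no room for any $\overline{W}_j^i$ in $D$. Your explicit $S_1$/$S_2$ decomposition and the bijection argument merely spell out details the paper leaves implicit.
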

\begin{proof}
By Lemma~\ref{CG:lem:bridge}, $D$ either contains $X_i$ or contains $\overline{X_i}$. 

If $X_i\subset D$, Lemma \ref{CG:lem:bridge} ensures that $\overline{X}_i \cap D = \emptyset$. So the intervals $\overline{T}_{j}^i$ have to be dominated by other intervals.

By Lemma~\ref{CG:lem:atleast}, $\ell_i$ intervals must dominate the $V$ and $\overline{V}$ intervals of $x_i$. Since no interval dominates two of them, each $\overline{T}_{j}^i$ has to be dominated by an interval that is also dominating a $V$ or $\overline{V}$ interval. The only interval that dominates both $\overline{T}_{j}^i$ and a $V$ or $\overline{V}$ interval is $\overline{U}_{j}^i$. So all the $\overline{U}$ intervals are in $D$ and $\overline{W} \cap D = \emptyset$ (since the only $V$ or $\overline{V}$ interval dominated by a $\overline{W}$ interval is a $\overline{V}$ interval, which is already dominated).

Similarly if $\overline{X_i}\subset D$, Lemma \ref{CG:lem:bridge} ensures that $X_i \cap D = \emptyset$. So the intervals $T_{j}^i$ have to be dominated by other intervals. And one can prove similarly that these intervals should be the $U$ intervals and then the $W$ intervals are not in $D$.

\end{proof}

\subsection{Safeness of the reduction.}
Let $(F,A_s,A_t)$ be an instance of \satrnospace, and let $D_s=D_F(A_s)$ and $D_t=D_F(A_t)$. By Lemma \ref{CG:lem:DS}, $(G_F,D_s,D_t)$ is an instance of \dsrnospace. We can now show the first direction of our reduction.

\begin{lemma}\label{CG:lem:Dir1}
If $(F,A_s,A_t)$ is a yes-instance of \satrnospace, then $(G_F,D_s,D_t)$ is a yes-instance of \dsrnospace.
\end{lemma}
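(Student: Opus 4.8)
The plan is to show that any satisfiability-reconfiguration sequence for $F$ lifts to a dominating-set reconfiguration sequence in $G_F$ between $D_s$ and $D_t$. Since a single variable flip in \satr transforms $A_s$ into $A_t$ through intermediate satisfying assignments, it suffices to simulate one variable flip (say flipping $x_k$) by a sequence of token slides between the corresponding dominating sets $D_F(A)$ and $D_F(A')$, where $A'$ agrees with $A$ except on $x_k$. Composing these per-flip transformations yields the full reconfiguration sequence.

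First I would recall the explicit structure of $D_F(A)$: it consists of the junction interval $J$, all dead-end intervals, and for each variable the triple of (positive or negative) bridge, $W$ or $\overline{W}$, and $\overline{U}$ or $U$ intervals according to the truth value of that variable. The dead-end intervals and $J$ are common to $D_F(A)$ and $D_F(A')$, so only the $3\ell_k+\frac{m}{2}\cdot\ldots$ intervals associated with $x_k$ need to move. The key point is that flipping $x_k$ from $1$ to $0$ means swapping its positive bridge intervals for negative ones, its $W$ intervals for $\overline{W}$ intervals, and its $\overline{U}$ intervals for $U$ intervals. I would realize each such swap as a short slide, using the connectivity established in Lemma~\ref{CG:lem:connected} together with the explicit paths $B_j^i T_j^i U_j^i V_j^i W_j^i$ (and their negative analogues) and the bridge-interval path $X_1^i \overline{X}_2^i X_2^i \cdots X_{\frac{m}{2}}^i$ exhibited in that proof, sliding each token along these adjacencies one at a time.

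The main obstacle will be maintaining domination at every intermediate step, since during the flip the token configuration for $x_k$ is neither $D_F(A)$ nor $D_F(A')$. The crucial observation I would exploit is that, because $A$ satisfies every clause and we flip only one variable through satisfying assignments, each clause interval $C_j$ retains at least one $W_j^i$ or $\overline{W}_j^i$ neighbor in the dominating set throughout the flip: if $x_k$'s literal was the unique satisfying literal of $c_j$ in $A$, then $A'$ not satisfying $F$ is ruled out by the \satr constraint, so some other variable's $W$ or $\overline{W}$ interval already dominates $C_j$. I would therefore sequence the slides so that the $W$/$\overline{W}$ tokens move last (or are handled clause-by-clause), guaranteeing that $C_j$, $C_j'$, and $J$ stay dominated. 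The domination of base, $T$, $\overline{T}$, $V$, $\overline{V}$ intervals during the swap follows from the local path structure, arranging slides so that each $T_j^k$ (resp. $V_j^k$) never loses both its candidate dominators simultaneously.

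Finally I would assemble these observations: order the moves as (i) slide the $\overline{U}/U$ tokens of $x_k$ along the $U_j^k$–$\overline{U}_j^k$ adjacencies through the shared base/path neighbourhood, (ii) reconfigure the bridge tokens using the bridge path, keeping base intervals dominated since consecutive bridges cover overlapping base intervals, and (iii) migrate the $W/\overline{W}$ tokens last, clause by clause, relying on the safeness of the \satr step to ensure each clause stays covered. Verifying at each atomic slide that the resulting multiset is still a dominating set — invoking Remark~\ref{CG:rk:dead-end}, Lemma~\ref{CG:lem:atleast} and the adjacency facts catalogued in the construction — completes the argument and establishes that $(G_F,D_s,D_t)$ is a yes-instance.
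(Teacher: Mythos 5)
Your overall plan --- simulate each variable flip of the \satr sequence by a sequence of token slides between $D_F(A_r)$ and $D_F(A_{r+1})$, using the other satisfying literal of each clause to keep the clause intervals dominated --- is the same as the paper's. However, there is a genuine gap at the heart of your argument: the bridge swap. You write that $J$ is ``common to $D_F(A)$ and $D_F(A')$, so only the intervals associated with $x_k$ need to move,'' and then propose to ``reconfigure the bridge tokens using the bridge path.'' This cannot work with only the $\frac{m}{2}$ bridge tokens of $x_k$: the interval $\overline{X}_1^i$ is adjacent to \emph{no} positive bridge interval, and (as Lemma~\ref{CG:lem:bridge} in effect shows) any set of exactly $\frac{m}{2}$ tokens dominating the $m$ base intervals must be either all of $X_i$ or all of $\overline{X_i}$, two configurations that are not reachable from one another by single slides while keeping the base intervals dominated. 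The paper's proof crucially uses the token on $J$ as a spare: since $D_F(A_r)\setminus J$ is already dominating (Lemma~\ref{CG:lem:DS}), that token is first walked to $\overline{X}_1^i$, which seeds the cascade $X_j^i\leadsto \overline{X}_{j+1}^i$ for $j=1,\dots,\frac{m}{2}-1$ (each base interval keeping a dominator from the previously filled negative bridge), and the leftover token on $X_{\frac{m}{2}}^i$ is walked back to $J$ at the end. Without this spare-token mechanism your step (ii) fails.

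A second, smaller problem is your ordering (i)--(iii). In $D_F(A)$ with $x_k=1$, each $\overline{T}_j^k$ is dominated \emph{only} by $\overline{U}_j^k$ (the negative bridges carry no tokens yet), so sliding the $\overline{U}_j^k$ tokens away before the bridge swap leaves $\overline{T}_j^k$ undominated. The paper's order is forced: first $W_j^i\leadsto V_j^i\leadsto U_j^i$ (so that $T_j^i$ survives the departure of the positive bridges), then the bridge cascade, and only then $\overline{U}_j^i\leadsto\overline{V}_j^i\leadsto\overline{W}_j^i$ (once the negative bridges dominate the $\overline{T}_j^i$). Your proposal moves the $U$-type tokens first and the $W$-type tokens last, which is essentially the reverse of the only order that preserves domination.
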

\begin{proof}
Let $(F,A_s,A_t)$ be a yes-instance of \satrnospace, and let $S=<A_1:=A_s,\ldots,A_{\ell}:=A_t>$ be the reconfiguration sequence from $A_s$ to $A_t$. We construct a reconfiguration sequence $S'$ from $D_s$ to $D_t$ by replacing any flip of variable $x_i\leadsto \overline{x_i}$ of $S$ from $A_r$ to $A_{r+1}$ by the following sequence of token slides from $D_F(A_r)$ to $D_F(A_{r+1})$ \footnote{A $\overline{x_i}\leadsto x_i$ consists in applying the converse of this sequence.}.

\begin{itemize}
\item  We perform a sequence of slides that moves the token on $J$ to $\overline{X}_1^i$. By Lemma \ref{CG:lem:connected}, $G_F$ is connected, and by Lemma \ref{CG:lem:DS}, $D_F(A_r)\setminus J$ is a dominating set. So any sequence of moves along a path from $J$ to $\overline{X}_1^i$ keeps a dominating set. 

\item For any $j$ such that $x_i\in C_j$, we first move the token from $W_j^i$ to $V_j^i$ then from $V_j^i$ to $U_j^i$. Let us show that this keeps $G_F$ dominated. The important intervals that can be dominated by $W_j^i$ are $V_j^i$, $C_j$, and $J$. The vertex $V_j^i$ is dominated anyway during the sequence since it is also dominated by $V_j^i$ and $U_j^i$. Moreover, since $x_i\leadsto \overline{x_i}$ keeps $F$ satisfied, each clause containing $x_i$ has a literal different from $x_i$ that also satisfies the clause. Thus, for each $C_j$ such that $x_i\in C_j$, there exists an interval $W_j^{i'}$ or $\overline{W}_j^{i'}$, with $i'\neq i$, that belongs to $D_F(A_r)$, and then dominates both $C_j$ and $J$ during these two moves.

\item For $j$ from $1$ to $\frac{m}{2}-1$, we apply the move $X_j^i\leadsto \overline{X}_{j+1}^i$. This move is possible since $X_j^i$ and $\overline{X}_{j+1}^i$ are neighbors in $G_F$. Let us show that this move keeps a dominating set. For $j=1$, the important intervals that are dominated by $X_1^i$ are $B_1^i$, $B_2^i$, and $T_1^i$. Since $U_1^i$ is in the current dominating set (by the second point), $T_1^i$ is dominated. Moreover $B_1^i$ is dominated by $\overline{X}_1^i$, and $B_2^i$ is a neighbor of $\overline{X}_{2}^i$. Thus, $X_1^i\leadsto \overline{X}_{2}^i$ maintains a dominating set. For $2\leq j\leq \frac{m}{2}-1$, the important intervals that are dominated by $X_j^i$ are $B_k^i$, $B_{k-2}^i$ and $T_j^i$ where $k=2j+1$ if $j$ is even and $k=2j$ otherwise. Again $T_j^i$ is dominated by the $U$ intervals. Moreover $B_{k-2}^i$ is dominated by $\overline{X}_{j-1}^i$ (on which there is a token since we perform this sequence for increasing $j$), and $B_k^i$ is also dominated by $\overline{X}_{j+1}^i$.

\item For any $j$ such that $\overline{x_i}\in C_j$, we move the token from $\overline{U}_j^i$ to $\overline{V}_j^i$ and then from $\overline{V}_j^i$ to $\overline{W}_j^i$. The important intervals  dominated by $\overline{U}_j^i$ are the intervals $\overline{T}_j^i$, $\overline{V}_j^i$. But $\overline{T}_j^i$ is dominated by a negative bridge interval, and $\overline{V}_j^i$ stays dominated by $\overline{V}_j^i$ then $\overline{W}_j^i$.

\item The previous moves lead to the dominating set $(D_F(A_{r+1})\setminus J) \cup X_{\frac{m}{2}}^i$. We finally perform a sequence of moves that slide the token on $X_{\frac{m}{2}}^i$ to $J$. It can be done since Lemma \ref{CG:lem:connected} ensures that $G_F$ is connected. And all along the transformation, we keep a dominating set by Lemma \ref{CG:lem:DS}. As wanted, it leads to the dominating set $D_F(A_{r+1})$.
\end{itemize}
\end{proof}

We now prove the other direction of the reduction. Let us prove the following lemma.

\begin{lemma}\label{CG:lem:S}
If there exists a reconfiguration sequence $S$ from $D_s$ to $D_t$, then there exists a reconfiguration sequence $S'$ from $D_s$ to $D_t$ such that for any two adjacent dominating sets $D_r$ and $D_{r+1}$ of $S'$, if both $D_r$ and $D_{r+1}$ have a moving variable, then it is the same one.
\end{lemma}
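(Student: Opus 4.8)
The plan is to rewrite $S$ step by step so that the only way the moving variable ever changes is by first parking the extra token on the junction interval $J$, which carries no moving variable. First I would record that, since token sliding preserves the number of tokens and $|D_s|=K$, every dominating set $D_r$ appearing in $S$ has size $K$; by Lemma~\ref{CG:lem:atleast} and the counting argument following it, each $D_r$ therefore has either a unique moving variable or none. Call a step $D_r\to D_{r+1}$ \emph{bad} if both sets have a moving variable and $mv(D_r)=x_i\neq x_{i'}=mv(D_{r+1})$. It is enough to replace every bad step by two steps passing through a dominating set with no moving variable: such a replacement is purely local (it only rewrites the transition between $D_r$ and $D_{r+1}$) and creates no new consecutive pair in which both sets have a moving variable, so performing it at every bad step yields the desired $S'$.

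Next I would locate the moving token. Write the bad step as the slide $D_{r+1}=D_r\setminus u\cup v$ with $uv\in E$. Since $mv(D_r)=x_i$, the set $D_r$ contains $\ell_i+\frac{m}{2}+1$ intervals of $x_i$ and exactly the minimum $\ell_j+\frac{m}{2}$ intervals of every other variable $x_j$, and symmetrically $D_{r+1}$ contains $\ell_{i'}+\frac{m}{2}+1$ intervals of $x_{i'}$ and the minimum elsewhere. Comparing the two counts, the slide must remove an interval of $x_i$ and add an interval of $x_{i'}$, so $u$ is an interval of $x_i$ and $v$ is an interval of $x_{i'}$. Because $i\neq i'$, and because the only edges of $G_F$ between an interval of $x_i$ and an interval of $x_{i'}$ join a $W$ or $\overline{W}$ interval of $x_i$ to a $W$ or $\overline{W}$ interval of $x_{i'}$ attached to a \emph{different} clause, $u$ is a $W$ or $\overline{W}$ interval attached to some clause $C_j$ and $v$ is a $W$ or $\overline{W}$ interval attached to a clause $C_{j_2}$ with $j_2\neq j$. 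Finally, $J\notin D_r$, since the unique extra token of $D_r$ already lies in $x_i$.

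Then I would reroute through $J$. As $J$ is adjacent to every $W$ and $\overline{W}$ interval, set $D_r':=D_r\setminus u\cup J$, the result of the slide $u\leadsto J$. Assuming $u=W_j^i$ (the case $u=\overline{W}_j^i$ is symmetric), the only vertices dominated by $u$ that are not already dominated by the dead-end intervals present in $D_r$ are $V_j^i$, $C_j$, $C_j'$ and $J$. In $D_r'$ the interval $J$ dominates itself, so I only need $V_j^i$, $C_j$ and $C_j'$ to remain dominated. But $D_{r+1}=D_r\setminus u\cup v$ is a dominating set and $v$ is attached to $C_{j_2}\neq C_j$, so $v$ dominates none of $V_j^i,C_j,C_j'$; hence these three intervals are dominated by $D_r\setminus u$, and $D_r'$ is a dominating set. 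Moreover $D_r'$ has $\ell_i+\frac{m}{2}$ intervals of $x_i$, the minimum for each other variable, and its extra token on $J$, so it has no moving variable. Since $D_r'\setminus J\cup v=D_r\setminus u\cup v=D_{r+1}$ and $Jv\in E$, the transition $D_r'\to D_{r+1}$ is the valid slide $J\leadsto v$. This completes the replacement of the bad step.

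I expect the main obstacle to be the verification that the intermediate set $D_r'$ is still dominating, and this is exactly where the geometry of the reduction is essential. The argument works because everything that the removed token $u$ was covering is confined to its own clause together with its private interval $V_j^i$, while the added token $v$ lives in another clause; knowing that $D_{r+1}$ is dominating then forces $C_j$, $C_j'$ and $V_j^i$ to be covered without $u$. Had $u$ and $v$ been attached to the same clause, the domination of $C_j$ could have rested on $u$ alone in $D_r$ and on $v$ alone in $D_{r+1}$, and routing through $J$ would break it; it is precisely the construction's guarantee that cross-variable adjacencies occur only between $W$ and $\overline{W}$ intervals of \emph{distinct} clauses that rules this out and legitimises the rerouting.
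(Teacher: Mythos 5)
Your proposal is correct and follows essentially the same route as the paper: identify that $u$ and $v$ must be $W$ or $\overline{W}$ intervals of the two moving variables attached to distinct clauses, then splice in the intermediate dominating set $D_r\setminus u\cup J$ (which has no moving variable) between $D_r$ and $D_{r+1}$. Your verification that this intermediate set dominates — by observing that everything only $u$ covers cannot be covered by $v$ and hence is covered by $D_r\setminus u=D_{r+1}\setminus v$ — is the same argument the paper gives, just phrased from the complementary side.
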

\begin{proof}
Assume that, in $S$, there exist two adjacent dominating sets $D_r$ and $D_{r+1}$ such that both $D_r$ and $D_{r+1}$ have a moving variable, and $mv(D_{r})\neq mv(D_{r+1})$. Let us modify slightly the sequence in order to avoid this move.

Since $D_r$ and $D_{r+1}$ are adjacent in $S$, we have $D_{r+1}=D_r\cup v\setminus u$, where $uv$ is an edge of $G_F$. Since $mv(D_{r})\neq mv(D_{r+1})$, $u$ is an interval of $mv(D_{r})$, and $v$ an interval of $mv(D_{r+1})$. By construction, the only edges of $G_F$ between intervals of different variables are between their $\{W, \overline{W} \}$ intervals. Thus, both $u$ and $v$ are $W$ or $\overline{W}$ intervals and, in particular they are adjacent to the junction interval $J$. 
Moreover, the only important intervals that are adjacent to $u$ (resp. $v$) are the $V$ or $\overline{V}$ intervals of the same variable as $u$, $W$ or $\overline{W}$ intervals, clause intervals, or the junction interval $J$. Since $u$ and $v$ are adjacent, and since they are both $W$ or $\overline{W}$ intervals, they cannot be adjacent to the same clause interval. But the only intervals that are potentially not dominated by $D_r\setminus u=D_{r+1}\setminus v$ should be dominated both by $u$ in $D_r$ and by $v$ in $D_{r+1}$. So these intervals are included in the set of $W$ or $\overline{W}$ intervals and the junction interval, which are all dominated by $J$. Thus, $D_r\cup J\setminus u$ is a dominating set of $G_F$. Therefore, we can add in $S$ the dominating set $D_r\cup J\setminus u$ between $D_r$ and $D_{r+1}$. This intermediate dominating set has no moving variable. By repeating this procedure while there are adjacent dominating sets in $S$ with different moving variables, we obtain the desired reconfiguration sequence $S'$.
\end{proof}

\begin{lemma}\label{CG:lem:Dir2}
If $(G_F,D_s,D_t)$ is a yes-instance of \dsrnospace, then $(F,A_s,A_t)$ is a yes-instance of \satrnospace.
\end{lemma}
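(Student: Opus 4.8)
The plan is to establish the converse direction of the reduction: given a reconfiguration sequence $S$ between the dominating sets $D_s$ and $D_t$ in $G_F$, I would extract a corresponding reconfiguration sequence between the assignments $A_s$ and $A_t$ in $F$. By Lemma~\ref{CG:lem:S}, I may assume the sequence $S'$ has the property that consecutive dominating sets sharing a moving variable share the \emph{same} moving variable. The key idea is to read off, from each dominating set $D$ of size $K$ in $S'$ that has \emph{no} moving variable, a variable assignment $A(D)$: by Lemma~\ref{CG:lem:bridge}, for every variable $x_i$ exactly one of $X_i\subseteq D$ or $\overline{X_i}\subseteq D$ holds, and I set $x_i=1$ in the former case and $x_i=0$ in the latter.

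\textbf{First}, I would argue that this assignment always satisfies $F$. Using Remark~\ref{CG:rk:dead-end} and Lemma~\ref{CG:lem:atleast}, a size-$K$ dominating set with no moving variable spends exactly $\ell_i+\frac{m}{2}$ tokens on each variable's intervals, and by Lemma~\ref{CG:lem:W} the $W$ (resp. $\overline{W}$) intervals present are precisely those consistent with the bridge choice. Since every clause interval $C_j$ must be dominated and its only possible dominators are the $W_j^i$ and $\overline{W}_j^i$ intervals (no token sits on $J$ when there is no moving variable, or I handle the role of $J$ separately), each clause has a present $W$ or $\overline{W}$ interval, which corresponds exactly to a literal satisfying $c_j$ under $A(D)$. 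Hence $A(D)$ satisfies $F$, and in particular $A(D_s)=A_s$ and $A(D_t)=A_t$.

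\textbf{Next}, I would track how $A(D)$ changes along $S'$. The only way to pass between two no-moving-variable dominating sets is through a contiguous block of dominating sets that all share a single moving variable $x_k$; during such a block only intervals of $x_k$ (together with the junction interval $J$) are reconfigured, so $A(D)$ restricted to the other variables is unchanged. I must show that across such a block the assignment of $x_k$ either stays fixed or flips exactly once, and that whenever it flips, \emph{both} the incoming and outgoing assignments satisfy $F$, so that the flip is a legal move in \satrnospace. The crux is that to convert $X_k$-tokens into $\overline{X_k}$-tokens one must route the domination of the clauses through $J$ (as in the forward construction of Lemma~\ref{CG:lem:Dir1}); at the moment of the flip, the clauses formerly satisfied by $x_k$ must already be satisfied by other literals, which is exactly the statement that the flip keeps $F$ satisfied.

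\textbf{The main obstacle} I anticipate is controlling the behaviour during a moving-variable block rigorously: within such a block the bridge intervals of $x_k$ may be in an ``intermediate'' configuration not covered by Lemma~\ref{CG:lem:bridge} (which requires $x_i\neq mv(D)$), so I cannot directly read off an assignment for $x_k$ there. I would overcome this by showing that the extra token on $x_k$ forces, at the entry and exit of the block, a well-defined bridge configuration, and that the domination constraints on the $V$, $\overline{V}$, $T$ and $\overline{T}$ intervals (via Lemma~\ref{CG:lem:atleast} and Lemma~\ref{CG:lem:W}) prevent the clause-domination from ever being dropped except precisely when it has been taken over by another variable's literal. Collapsing each such block to a single flip then yields a valid \satrnospace reconfiguration sequence from $A_s$ to $A_t$, completing the proof.
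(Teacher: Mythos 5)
Your proposal is correct in substance, but it organizes the argument differently from the paper, and in a way that genuinely simplifies it. The paper defines an assignment $A(D)$ for \emph{every} dominating set of the sequence, including those that have a moving variable $x_k$; this forces it to adopt a delicate rule for reading off the value of $x_k$ itself (based on whether some clause interval is dominated \emph{only} through a $W^k$ or $\overline{W}^k$ interval) and, in the hardest case of its satisfiability claim, to prove that such intermediate assignments still satisfy $F$ by backtracking to the last set with no moving variable. You instead read off assignments only at the sets with no moving variable and collapse each maximal moving-variable block to a single (possibly empty) flip. Since Lemma~\ref{CG:lem:S} makes the moving variable constant on each block, and since the bridge configuration of a non-moving variable cannot change in a single token slide (each of $X_i,\overline{X_i}$ has $\frac{m}{2}\ge 2$ elements, so Lemma~\ref{CG:lem:bridge} pins it down; this is exactly Claim~\ref{CG:claimr:same} of the paper), only $x_k$ can differ between the two endpoints of a block; and both endpoint assignments satisfy $F$ because at a no-moving-variable set every clause interval must be dominated by some $W_j^i$ or $\overline{W}_j^i$, whose literal is then true by Lemmas~\ref{CG:lem:bridge} and~\ref{CG:lem:W}. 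This sidesteps the paper's most technical case entirely. Two points to tighten. First, a clause interval can carry the one spare token itself, so you must invoke the duplicate interval $C_j'$ (non-adjacent to $C_j$ and dominated only by the same $W/\overline{W}$ intervals) to conclude that some $W_j^i$ or $\overline{W}_j^i$ is actually present in $D$; your parenthetical about $J$ is a red herring, since $J$ frames the clause intervals and is therefore not adjacent to them. Second, your ``main obstacle'' paragraph worries about reading off the value of $x_k$ \emph{inside} a block, but in your own scheme this is unnecessary: you only evaluate $A(D)$ at block boundaries, where no variable is moving and Lemma~\ref{CG:lem:bridge} applies to all variables.
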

\begin{proof}

Let $(G_F,D_s,D_t)$ be a yes-instance of \dsrnospace. There exists a reconfiguration sequence $S'$ from $D_s$ to $D_t$. Moreover, by Lemma \ref{CG:lem:S}, we can assume that for any two adjacent dominating sets $D_r$ and $D_{r+1}$ of $S'$, if both $D_r$ and $D_{r+1}$ have a moving variable, then it is the same one.

Let us construct a reconfiguration sequence $S$ from $A_s$ to $A_t$. To any dominating set $D$ of $G_F$, we associate a variable assignment $A(D)$ of $F$ defined as follows. For any variable $x_i\neq mv(D)$, either $X_i\subset D$ or $\overline{X}_i \subset D$ by Lemma~\ref{CG:lem:bridge}. If $X_i\subset D$ then we set $x_i=1$. Otherwise, we set $x_i=0$. 
Let $x_k$ be such that $mv(D)=x_k$ if it exists. If there exists a clause interval $C_j$ such that $W_j^k\in D$, and if for any $x_i\neq x_k$ with $x_i\in c_j$, we have $\overline{X_i}\subset D$, and for any $x_i\neq x_k$ with $\overline{x_i}\in c_j$, we have $X_i\subset D$, then we set $x_k=1$. Otherwise $x_k=0$.

Let $S$ be the sequence of assignments obtained by replacing in $S'$ any dominating set $D$ by the assignment $A(D)$. In order to conclude, we must show that the assignments associated to $D_s$ and $D_t$ are precisely $A_s$ and $A_t$. Moreover, for every dominating set $D$, the assignment associated to $D$ has to satisfy $F$. Finally, for every move in $G_F$, we must be able to associate a (possibly empty) variable flip. Let us first show a useful claim, then proceed with the end of the proof.

\begin{claim}\label{CG:claimr:same}
For any consecutive dominating sets $D_r$ and $D_{r+1}$ and any variable $x_i$ that is not the moving variable of $D_r$ nor $D_{r+1}$, the value of $x_i$ is identical in $A(D_r)$ and $A(D_{r+1})$.
\end{claim}
\begin{proofclaim}
Lemma \ref{CG:lem:bridge} ensures that for any $x_i$ such that $x_i\neq mv(D_r)$ and $x_i\neq mv(D_{r+1})$, either $X_i\subset D_r$ and $\overline{X_i}\cap D_r = \emptyset$ or $\overline{X_i}\subset D_r$ and $X_i\cap D_r = \emptyset$, and the same holds in $D_{r+1}$. Since the number of positive and negative bridge intervals is at least $2$ (since by assumption $m$ is a multiple of $4$), and $D_{r+1}$ is reachable from $D_r$ in a single step, either both $D_r$ and $D_{r+1}$ contain $X_i$, or both contain $\overline{X_i}$. Thus, by definition of $A(D)$, for any variable $x_i$ such that $x_i\neq mv(D_r)$ and $x_i\neq mv(D_{r+1})$, $x_i$ has the same value in $A(D_r)$ and $A(D_{r+1})$.
\end{proofclaim}

\begin{claim}\label{CG:claimr:AsAt}
We have $A(D_s)=A_s$ and $A(D_t)=A_t$.
\end{claim}
\begin{proofclaim}
By definition, $D_s=D_F(A_s)$ and thus $D_s$ contains the junction interval, which means that it does not have any moving variable. Moreover, $D_s$ contains $X_i$ for any variable $x_i$ such that $x_i=1$ in $A_s$ and $\overline{X_i}$ for any variable $x_i$ such that $x_i=0$ in $A_s$. Therefore, for any variable $x_i$, $x_i=1$ in $A_s$ if and only if $x_i=1$ in $A(D_s)$. Similarly, $A(D_t)=A_t$.
\end{proofclaim}

\begin{claim}\label{CG:claimr:AsatisfiesF}
For any dominating set $D$ of $S'$, $A(D)$ satisfies $F$.
\end{claim}
\begin{proofclaim}
Since the clause intervals are only adjacent to $W$ and $\overline{W}$ intervals, they are dominated by them, or by themselves in $D$. But only one clause interval can belong to $D$. Thus, for any clause interval $C_j$, if $C_j\in D$, then $C_j'$ must be dominated by a $W$ or a $\overline{W}$ interval, that also dominates $C_j$. So in any case, $C_j$ is dominated by a $W$ or a $\overline{W}$ interval. We study four possible cases and show that in each case, $c_j$ is satisfied by $A(D)$.

If $C_j$ is dominated in $D$ by an interval $W_j^i$, where $x_i\neq mv(D)$, then by Lemmas~\ref{CG:lem:bridge} and~\ref{CG:lem:W}, $X_i\subset D$ and by definition of $A(D)$, $x_i=1$. Since $W_j^i$ exists, it means that $x_i\in c_j$, thus $c_j$ is satisfied by $A(D)$. 

Similarly, if $C_j$ is dominated in $D$ by an interval $\overline{W}_j^i$, where $x_i\neq mv(D)$, then by Lemmas~\ref{CG:lem:bridge} and~\ref{CG:lem:W}, $\overline{X_i}\subset D$. So $x_i=0$. Since $\overline{W}_j^i$ exists, $\overline{x_i}\in c_j$, and therefore $c_j$ is satisfied by $A(D)$.

If $C_j$ is only dominated by $W_j^k$ in $D$, where $x_k=mv(D)$. Then, if there exists $x_i\neq x_k$ with $x_i\in c_j$ and $X_i\subset D$ (resp. $\overline{x_i}\in c_j$ and $\overline{X_i}\subset D$), then $x_i=1$ (resp. $x_i=0$) and $c_j$ is satisfied by $A(D)$. So we can assume that, for any $x_i\neq x_k$ with $x_i\in c_j$ we have $X_i\not \subset D$. By Lemma~\ref{CG:lem:bridge}, $\overline{X_i}\subset D$. And for any $x_i\neq x_k$ such that $\overline{x_i}\in c_j$ we have $\overline{X_i}\not \subset D$, and thus $X_i\subset D$. So, by definition of $A(D)$, we have $x_k=1$. Since $x_k\in c_j$ (since $W_j^k$ exists), $c_j$ is satisfied by $A(D)$.

Finally, assume that $C_j$ is only dominated by $\overline{W}_j^k$ in $D$, where $x_k=mv(D)$. If there exists $x_i\neq x_k$ such that $x_i\in c_j$ and $X_i\subset D$ (resp. $\overline{x_i}\in c_j$ and $\overline{X_i}\subset D$), then $x_i=1$ (respectively $x_i=0$) so $c_j$ is satisfied by $A(D)$. Thus, by Lemma \ref{CG:lem:bridge}, we can assume that for any $x_i\neq x_k$ such that $x_i\in c_j$ (resp. $\overline{x_i}\in c_j$), we have $\overline{X_i}\subset D$ (resp. $X_i\subset D$). Let us show that there is no clause interval $C_{j'}$ dominated by a $W_i^k$ interval of $x_k$ in $D$ and that satisfies, for any $x_i\neq x_k$, if $x_i\in c_{j'}$ then $\overline{X_i}\subset D$, and if $\overline{x_i}\in c_{j'}$ then $X_i\subset D$. This will imply $x_k=0$ by construction and then the fact that $c_j$ is satisfied.

Since $D_s$ has no moving variable, there exists a dominating set before $D$ in $S'$ with no moving variable. Let $D_r$ be the the latest in $S'$ amongst such dominating sets. By assumption, $mv(D_q)=x_k$ for any set $D_q$ that comes earlier than $D$ but later than $D_r$. Thus, by Claim \ref{CG:claimr:same}, for any variable $x_i\neq x_k$, $x_i$ has the same value in $A(D_r)$ and $A(D)$.

Now, by assumption, for any $x_i\neq x_k$ with $x_i\in c_j$ (resp. $\overline{x_i}\in c_j$) we have $\overline{X_i}\subset D$ (resp. $X_i\subset D$). Thus, since $x_i$ has the same value in $D$ and $D_r$, if $x_i\in c_j$ (resp. $\overline{x_i}\in c_j$) then $\overline{X_i}\subset D_r$ (resp. $X_i\subset D_r$) and then, by Lemma \ref{CG:lem:W}, $W_j^i\not \in D_r$ (resp. $\overline{W}_j^i\not \in D_r$). Therefore, $C_j$ is only dominated by $\overline{W}_j^k$ in $D_r$. But since $D_r$ has no moving variable, $\overline{X_k}\subset D_r$ by Lemma \ref{CG:lem:bridge} and Lemma \ref{CG:lem:W}. Thus, by Lemma \ref{CG:lem:W}, for any $j'\neq j$, $W_{j'}^k\not \in D_r$. So for any $j'\neq j$ such that $x_k\in c_{j'}$, $C_{j'}$ is dominated by at least one interval $W_{j'}^i$ or $\overline{W}_{j'}^i$ in $D_r$, where $x_i\neq x_k$. Lemma \ref{CG:lem:W} ensures that if $C_{j'}$ is dominated by $W_{j'}^i$ (resp. $\overline{W}_{j'}^i$) in $D_r$ then $X_i\subset D_r$ (resp. $\overline{X_i}\subset D_r$), and since $x_i$ has the same value in $D$ and $D_r$, it gives $X_i\subset D$ (resp. $\overline{X_i}\subset D$). Therefore, by Lemma \ref{CG:lem:bridge}, if a clause interval $C_{j'}$ is dominated by a $W$ interval of $x_k$ in $D$, then either there exists $x_i\neq x_k$ such that $x_i\in c_{j'}$ and $D(\overline{x_i})\not \subset D$, or there exists $x_i\neq x_k$ such that $\overline{x_i}\in c_j'$ and $D(x_i)\not \subset D$. By definition of $A(D)$, this implies that $x_k=0$ in $A(D)$. Since $\overline{W}_j^k$ exists, $\overline{x_k}\in c_j$ thus $c_j$ is satisfied by $A(D)$.


Therefore, every clause of $F$ is satisfied by $A(D)$, which concludes the proof.
\end{proofclaim}

\begin{claim}\label{CG:claimr:IsASwitch}
For any two dominating sets $D_r$ and $D_{r+1}$ of $S'$, either $A(D_{r+1})=A(D_{r})$, or $A(D_{r+1})$ is reachable from $A(D_r)$ with a variable flip move.\end{claim}
\begin{proofclaim}
By Claim \ref{CG:claimr:same}, for any variable $x_i$ such that $x_i\neq mv(D_r)$ and $x_i\neq mv(D_{r+1})$, $x_i$ has the same value in $A(D_r)$ and $A(D_{r+1})$.
Moreover, by definition of $S'$, if both $D_r$ and $D_{r+1}$ have a moving variable then $mv(D_r)=mv(D_{r+1})$. Therefore, at most one variable change its value between $A(D_r)$ and $A(D_{r+1})$, which concludes the proof.
\end{proofclaim}

   \end{proof}

We now have all the ingredients to prove our main result:

\begin{proof}[of Theorem~\ref{CG:thm:CG}]
Let $D_s=D_F(A_s)$ and $D_t=D_F(A_t)$. Lemma \ref{CG:lem:Dir1} and \ref{CG:lem:Dir2} ensure that $(G_F,D_s,D_t)$ is a yes-instance of \dsrnospace if and only if $(F,A_s,A_t)$ is a yes-instance of \satrnospace. Since \satr is \psc \cite{gopalan2009connectivity}, it gives the result.
   \end{proof}




\bibliographystyle{abbrv}
\bibliography{biblio.bib}

\begin{thebibliography}{10}

\bibitem{bonamy2019conjecture}
M.~Bonamy, N.~Bousquet, C.~Feghali, and M.~Johnson.
\newblock On a conjecture of mohar concerning kempe equivalence of regular
  graphs.
\newblock {\em Journal of Combinatorial Theory, Series B}, 135:179--199, 2019.

\bibitem{BonamyBHIKMMW19}
M.~Bonamy, N.~Bousquet, M.~Heinrich, T.~Ito, Y.~Kobayashi, A.~Mary,
  M.~M{\"{u}}hlenthaler, and K.~Wasa.
\newblock The perfect matching reconfiguration problem.
\newblock In {\em 44th International Symposium on Mathematical Foundations of
  Computer Science, {MFCS} 2019}, pages 80:1--80:14, 2019.

\bibitem{bonamy2019dominating}
M.~Bonamy, P.~Dorbec, and P.~Ouvrard.
\newblock Dominating sets reconfiguration under token sliding.
\newblock {\em arXiv preprint arXiv:1912.03127}, 2019.

\bibitem{bousquet2019polynomial}
N.~Bousquet and M.~Heinrich.
\newblock A polynomial version of cereceda's conjecture.
\newblock {\em arXiv preprint arXiv:1903.05619}, 2019.

\bibitem{bousquet2020linear}
N.~Bousquet, A.~Joffard, and P.~Ouvrard.
\newblock Linear transformations between dominating sets in the tar-model.
\newblock In {\em 31st International Symposium on Algorithms and Computation
  (ISAAC 2020)}. Schloss Dagstuhl-Leibniz-Zentrum f{\"u}r Informatik, 2020.

\bibitem{CerecedaHJ11}
L.~Cereceda, J.~van~den Heuvel, and M.~Johnson.
\newblock {Finding paths between 3-colorings}.
\newblock {\em Journal of Graph Theory}, 67(1):69--82, 2011.

\bibitem{gopalan2009connectivity}
P.~Gopalan, P.~G. Kolaitis, E.~Maneva, and C.~H. Papadimitriou.
\newblock The connectivity of boolean satisfiability: computational and
  structural dichotomies.
\newblock {\em SIAM Journal on Computing}, 38(6):2330--2355, 2009.

\bibitem{haas2014k}
R.~Haas and K.~Seyffarth.
\newblock The k-dominating graph.
\newblock {\em Graphs and Combinatorics}, 30(3):609--617, 2014.

\bibitem{haddadan2016complexity}
A.~Haddadan, T.~Ito, A.~E. Mouawad, N.~Nishimura, H.~Ono, A.~Suzuki, and
  Y.~Tebbal.
\newblock The complexity of dominating set reconfiguration.
\newblock {\em Theoretical Computer Science}, 651:37--49, 2016.

\bibitem{HearnD05}
R.~A. Hearn and E.~Demaine.
\newblock {PSPACE}-completeness of sliding-block puzzles and other problems
  through the nondeterministic constraint logic model of computation.
\newblock {\em Theor. Comput. Sci.}, 343(1-2), Oct. 2005.

\bibitem{ito2011complexity}
T.~Ito, E.~D. Demaine, N.~J. Harvey, C.~H. Papadimitriou, M.~Sideri, R.~Uehara,
  and Y.~Uno.
\newblock On the complexity of reconfiguration problems.
\newblock {\em Theoretical Computer Science}, 412(12-14):1054--1065, 2011.

\bibitem{keil1993complexity}
J.~M. Keil.
\newblock The complexity of domination problems in circle graphs.
\newblock {\em Discrete Applied Mathematics}, 42(1):51--63, 1993.

\bibitem{lokshtanov2018complexity}
D.~Lokshtanov and A.~E. Mouawad.
\newblock The complexity of independent set reconfiguration on bipartite
  graphs.
\newblock {\em ACM Transactions on Algorithms (TALG)}, 15(1):1--19, 2018.

\bibitem{Nishimura17}
N.~Nishimura.
\newblock Introduction to reconfiguration.
\newblock {\em preprint}, 2017.

\bibitem{SuzukiMN14}
A.~Suzuki, A.~Mouawad, and N.~Nishimura.
\newblock {Reconfiguration of Dominating Sets}.
\newblock {\em CoRR}, 1401.5714, 2014.

\bibitem{van2013complexity}
J.~van~den Heuvel.
\newblock The complexity of change.
\newblock {\em Surveys in combinatorics}, 409(2013):127--160, 2013.

\end{thebibliography}

\end{document}